\numberwithin{equation}{section}
\theoremstyle{plain}
\newtheorem{dfn}{Definition}[section]
\newtheorem{prop}[dfn]{Proposition}
\newtheorem{thm}[dfn]{Theorem}
\newtheorem{lem}[dfn]{Lemma}
\newtheorem{cor}[dfn]{Corollary}
\theoremstyle{definition}
\newtheorem{rmk}[dfn]{Remark}
\newenvironment{prf}{\begin{proof}[{\it Proof: \nopunct}]}{\end{proof}}
\def\bC          {\mathbb C}
\def\bP          {\mathbb P}
\def\bZ          {\mathbb Z}
\def\cA           {\mathcal A}
\def\cF           {\mathcal F}
\def\cI           {\mathcal I}
\def\cJ           {\mathcal J}
\def\fH{\mathfrak H}
\def\fp         {\mathfrak p}
\DeclareMathOperator{\PSL}{PSL}
\DeclareMathOperator{\SL}{SL}
\title{An algebraic characterization of the Kronecker function}
\author{Nils Matthes}
\address{Department of Mathematics, Kyushu University, 744 Motooka, Nishi-Ku, Fukuoka 819-0395, Japan}
\address{
Current address: Mathematical Institute, University of Oxford,
Andrew Wiles Building, Radcliffe Observatory Quarter, Woodstock Road, Oxford OX2 6GG, United Kingdom}
\email{nils.matthes@maths.ox.ac.uk}
\subjclass[2010]{11F67 (11F27)}
\keywords{Periods of modular forms, theta functions}
\begin{document}
\begin{abstract}
We characterize Zagier's generating series of extended period polynomials of normalized Hecke eigenforms for $\PSL_2(\bZ)$ in terms of the period relations and existence of a suitable factorization. For this we use a characterization of the Kronecker function as the ``fundamental solution'' of the Fay identity.
\end{abstract}
\maketitle

\section{Introduction}

\subsection{Zagier's generating series of extended period polynomials}

To every cusp form\footnote{Here and in what follows every modular form will be for the full modular group $\PSL_2(\bZ):=\SL_2(\bZ)/\{\pm 1\}$.} $f$ one can attach its period polynomial $r_f(X)$. This is an important arithmetic invariant whose coefficients are, up to elementary factors, the critical values of the completed L-function of $f$. Period polynomials were studied extensively by Eichler, Shimura and Manin (see for example \cite[Chapter V]{Lang:ModularForms} for a textbook account) and their definition was extended to all modular forms by Zagier, \cite{Zagier:Periods}. Nowadays, extended period polynomials are well-studied objects in number theory which keep attracting attention. For example, they are related to zeta functions of real quadratic fields, \cite{KZ}, and multiple zeta values, \cite{GKZ}. More recently, an analogue of the Riemann hypothesis was proved for them, \cite{CFI,Ono}.

In \cite{Zagier:Periods}, Zagier introduced a generating series $C(X,Y;\tau;T)$ of extended period polynomials of normalized Hecke eigenforms (see Section \ref{ssec:4.1} for its definition), and related it to the Kronecker function, \cite[Chapter VIII]{Weil:Elliptic},
\begin{equation}
F_{\tau}(u,v):=\frac{\theta'_{\tau}(0)\theta_{\tau}(u+v)}{\theta_{\tau}(u)\theta_{\tau}(v)}.
\end{equation}
Here $\theta_{\tau}(u)$ denotes Jacobi's odd theta function (see Section \ref{ssec:2.1} for the definition). More precisely, Zagier's main result is as follows.
\begin{thm}[Zagier; {\cite{Zagier:Periods}, eqn. (17)}] \label{thm:Zagier}
We have the following identity of formal series in variables $X,Y,T$:
\begin{equation} \label{eqn:ZagiersTheorem}
C(X,Y;\tau;T)=F_{\tau}(T,-XYT)F_{\tau}(XT,YT).
\end{equation}
\end{thm}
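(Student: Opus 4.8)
The plan is to prove Theorem~\ref{thm:Zagier} by showing that both sides of \eqref{eqn:ZagiersTheorem} satisfy one and the same list of algebraic constraints, and that these constraints admit a unique solution. Concretely, I would work in the ring of formal Laurent series in $T$ whose coefficients are polynomials in $X,Y$ over a suitable ($\tau$-dependent) coefficient ring, into which both $C(X,Y;\tau;T)$ and the right-hand side $\Phi(X,Y;\tau;T):=F_{\tau}(T,-XYT)F_{\tau}(XT,YT)$ embed once the Kronecker function is expanded around $T=0$. Recall that $F_{\tau}(u,v)$ has simple poles along $u=0$ and $v=0$, so each factor of $\Phi$ contributes a first-order pole in $T$ and the product begins at order $T^{-2}$; this polar part must be tracked throughout. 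The two constraints are (i) the \emph{period relations}, the translation into the variables $X,Y$ of Zagier's relations coming from the generators $S$ and $U$ of $\PSL_2(\bZ)$, and (ii) the \emph{factorization} of the series as a product of two Kronecker-type functions.

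First I would record the two functional symmetries of the Kronecker function, namely $F_{\tau}(u,v)=F_{\tau}(v,u)$ and $F_{\tau}(-u,-v)=-F_{\tau}(u,v)$ (both immediate from the oddness of $\theta_{\tau}$), together with the Fay identity
\begin{equation}
F_{\tau}(u_1,v_1)F_{\tau}(u_2,v_2)=F_{\tau}(u_1,v_1+v_2)F_{\tau}(u_2-u_1,v_2)+F_{\tau}(u_2,v_1+v_2)F_{\tau}(u_1-u_2,v_1).
\end{equation}
Using these I would verify directly that $\Phi$ obeys the period relations: the two-term ($S$) relation reduces to the parity and swap symmetries after substituting the appropriate $S$-action on $(X,Y,T)$, while the three-term ($U$) relation is the image of the Fay identity under the substitutions dictated by the arguments $(T,-XYT)$ and $(XT,YT)$. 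The bookkeeping here — checking that the arguments match up under $S$ and $U$ and that the substitution in the Fay identity produces precisely the three cyclic terms — is the most delicate computational part, but it is mechanical once the correct substitution has been pinned down.

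Next I would verify that $\Phi$ satisfies the factorization constraint: this is immediate, since $\Phi$ is literally a product of two Kronecker functions, and the characterization of $F_{\tau}$ as the fundamental solution of the Fay identity guarantees that each factor is of the required type. On the other side, $C$ satisfies the period relations by Zagier's construction from period polynomials of Hecke eigenforms, and its factorization is the structural content encoded in its definition in Section~\ref{ssec:4.1}. The conclusion then follows from the uniqueness statement: any series in our ring satisfying both the period relations and the factorization is determined uniquely, up to the normalization fixed by $\theta'_{\tau}(0)$ and the leading behaviour in $T$, whence $C=\Phi$.

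I expect the genuine obstacle to lie not in verifying that $\Phi$ meets the constraints, but in establishing that these constraints pin down a unique solution — that is, in the characterization of the Kronecker function (and hence of $C$) as the fundamental solution of the Fay identity. The heart of the matter is to show that the Fay identity, supplemented by the pole normalization, has $F_{\tau}$ as its \emph{only} solution, so that the factorization forces each factor to be a Kronecker function with the prescribed arguments. A secondary point requiring care is the exact matching of normalizations and of the polar/constant terms in $T$, where an identity of this shape is most easily spoiled by an overall scalar factor.
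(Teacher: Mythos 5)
Theorem~\ref{thm:Zagier} is not proved in this paper at all: it is quoted from Zagier's article \cite{Zagier:Periods} and then \emph{used} as the bridge between the period relations and the Fay identity. So there is no internal proof to compare against; the question is whether your strategy could stand on its own, and it cannot, for two concrete reasons. First, you assert that the ``factorization constraint'' for $C$ is ``immediate'' because it is ``the structural content encoded in its definition.'' It is not. The definition of $C$ is a sum over normalized Hecke eigenforms of $\tfrac12\bigl(r_f(X)r_f(Y)-r_f(-X)r_f(-Y)\bigr)f(\tau)/\bigl((2i)^{k-3}\langle f,f\rangle\bigr)$, summed over all weights $k$; nothing in this expression forces the total series to decompose as $f(T,-XYT)\,f(XT,YT)$ for a single two-variable Laurent series $f$. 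That factorization is precisely the nontrivial content of Zagier's theorem, so invoking it as a hypothesis satisfied by $C$ makes the argument circular. The part of your plan that does work --- checking that $\Phi:=F_{\tau}(T,-XYT)F_{\tau}(XT,YT)$ satisfies the period relations via antisymmetry and the Fay identity --- is exactly Proposition~\ref{prop:periodsFay} of the paper, but it only constrains the right-hand side, not $C$.

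Second, even if both constraints were available for $C$, they do not pin down a unique solution, so the concluding step ``whence $C=\Phi$'' fails. The introduction points out that the period relations alone are far from rigid ($C+r_g(X)T^{k-2}$ also satisfies them for any modular form $g$ of weight $k$), and Theorem~\ref{thm:main2} --- which is itself proved \emph{using} Theorem~\ref{thm:Zagier} --- shows that period relations plus factorization determine the series only up to a family parametrized by $\alpha,\beta,\delta$ and $\tau\in\fH\cup\{i\infty\}$ modulo an $\SL_2(\bZ)$-action. To single out the member with the given $\tau$ and no rescaling of $Y$ and $T$ you would still have to compute enough coefficients $c_k(X,Y;\tau)$ of $C$ explicitly (including the regularized Eisenstein contributions at low weight) and match them against the Laurent coefficients of $\Phi$, which is a genuinely analytic computation. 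An actual proof of \eqref{eqn:ZagiersTheorem} requires the input you have omitted: Zagier's construction of holomorphic kernel functions for the Petersson product, the identification of their period polynomials, and the theta-function evaluation of their generating series. Your proposal replaces this analytic core with a uniqueness principle that the paper's own results show is not strong enough.
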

As two applications, we mention the algorithmic computation (up to scalars) of the period polynomials of all cuspidal Hecke eigenforms, as well as a new proof of the Eichler--Selberg trace formula for $\PSL_2(\bZ)$, \cite{Zagier:Hecke}.
\subsection{Period relations and the main result}
The goal of this paper is to give another application of Zagier's result, namely a purely algebraic characterization of $C$ in terms of the period relations for extended period polynomials
\begin{equation} \label{eqn:periodrels}
\begin{aligned}
r_f(X)\vert(1+S)=r_f(X)\vert(1+U+U^2)=0,
\end{aligned}
\end{equation}
where $S=\left(\begin{smallmatrix}0&-1\\1&0\end{smallmatrix}\right)$, $U=\left(\begin{smallmatrix}1&-1\\1&0\end{smallmatrix}\right)$, and $\vert$ denotes the slash operator. Indeed, since $C$ is defined as a generating series of extended period polynomials it also satisfies a suitable version of the period relations, see Equation \eqref{eqn:periodrelations}.

It turns out that the period relations alone do not suffice to characterize $C$, as for any modular form $f$ of weight $k$ the series $C+r_f(X)T^{k-2}$ also satisfies the period relations. However, if we demand in addition the existence of a factorization as in Theorem \ref{thm:Zagier}, then the situation is much better and our main result is as follows.
\begin{thm} \label{thm:main2}
For a formal Laurent series $f(u,v)=\sum_{m,n>\!\!>-\infty}a_{m,n}u^mv^n \in \bC((u,v))$, assume that
\begin{equation}
C_f(X,Y,T):=f(T,-XYT)f(XT,YT) \in \bC((X,Y,T))
\end{equation}
satisfies the period relations, Equation \eqref{eqn:periodrelations}. Then either:
\begin{enumerate}
\item[(i)]
There exist unique $\alpha,\beta \in \bC$ such that
\begin{equation} \label{eqn:case1}
C_f(X,Y,T)=C_{P_{\alpha,\beta}}(X,Y,T)=\frac{(\alpha XY-\beta)(\beta X+\alpha Y)}{X^2Y^2T^2},
\end{equation}
where $P_{\alpha,\beta}(u,v)=\alpha u^{-1}+\beta v^{-1}$.
\item[(ii)]
There exist $\alpha,\beta,\delta \in \bC^{\times}$ and $\tau \in \fH \cup \{i\infty\}$ such that
\begin{equation} \label{eqn:case2}
C_f(X,Y,T)=C_{F_{\tau}}(X,Y',T')=C(X,Y';\tau;T'), \quad \mbox{where } T':=\frac{T}{\alpha}, \, Y':=\frac{\alpha Y}{\beta},
\end{equation}
Moreover, if $\alpha',\beta',\delta',\tau'$ are a different choice of parameters as above, then there exists a matrix $\left(\begin{smallmatrix}a&b\\c&d\end{smallmatrix}\right) \in \SL_2(\bZ)$ such that
\begin{equation}
(\alpha',\beta',\delta',\tau')=\left( \alpha(c\tau+d),\beta(c\tau+d),\frac{\delta}{c\tau+d},\frac{a\tau+b}{c\tau+d} \right).
\end{equation}
\end{enumerate}
\end{thm}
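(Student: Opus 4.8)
The strategy is to convert the hypothesis that $C_f$ satisfies the period relations \eqref{eqn:periodrelations} into a functional equation for the single series $f$, and then to feed this into the characterization of the Kronecker function as the fundamental solution of the Fay identity. I begin by recording the structural feature of the factorization: writing $(u_1,v_1)=(T,-XYT)$ and $(u_2,v_2)=(XT,YT)$ for the two argument pairs of $f$ in $C_f=f(u_1,v_1)f(u_2,v_2)$, one has $u_1v_1+u_2v_2=0$, so that $C_f$ genuinely depends on only three independent quantities. First I would substitute this factorization into each slash-term of \eqref{eqn:periodrelations}. Because $S$ and $U$ act on $(X,Y)$ through the linear substitutions $(X,Y)\mapsto(-Y,X)$ and $(X,Y)\mapsto(X-Y,X)$, accompanied by the action on $T$ dictated by the weight grading, every term becomes an explicit product of two evaluations of $f$, and the period relations turn into polynomial identities among such products.

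Next I would disentangle the two relations. I expect the order-two relation coming from $1+S$ to collapse, after cancelling common factors, to the parity and symmetry constraints satisfied by a solution of the Fay identity (for the Kronecker function these read $F_{\tau}(-u,-v)=-F_{\tau}(u,v)$ and $F_{\tau}(u,v)=F_{\tau}(v,u)$). The decisive step is the order-three relation from $1+U+U^2$: after clearing the common prefactor and eliminating one variable via $u_1v_1+u_2v_2=0$, the resulting six-factor identity should be recognizable as exactly the Fay (three-term) identity for $f$. Matching the arguments of the three products against the three products of the Fay identity is the delicate bookkeeping on which the whole argument hinges, and I regard this as the main obstacle.

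Granting that $f$ solves the Fay identity, I invoke the characterization result. Its conclusion is precisely the dichotomy in the statement: the degenerate (genus-zero) solutions are the functions $P_{\alpha,\beta}(u,v)=\alpha u^{-1}+\beta v^{-1}$, giving case (i), while the non-degenerate solutions are the rescaled Kronecker functions $f(u,v)=\delta F_{\tau}(u/\alpha,v/\beta)$, giving case (ii). For case (i) a direct substitution into $C_f=f(T,-XYT)f(XT,YT)$ yields $\frac{(\alpha XY-\beta)(\beta X+\alpha Y)}{X^2Y^2T^2}$, which is \eqref{eqn:case1}; uniqueness of $\alpha,\beta$ then follows by reading off coefficients. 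For case (ii) I substitute the rescaled Kronecker function, set $T'=T/\alpha$ and $Y'=\alpha Y/\beta$, and match the two resulting $F_{\tau}$-factors with the definition of $C(X,Y';\tau;T')$; together with Zagier's Theorem \ref{thm:Zagier} this gives \eqref{eqn:case2}, the normalization of $F_{\tau}$ near the origin fixing the admissible parameters.

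Finally, the asserted $\SL_2(\bZ)$-ambiguity in the parameters $(\alpha,\beta,\delta,\tau)$ is, I expect, a direct consequence of the modular transformation law of the Kronecker function: under $\tau\mapsto(a\tau+b)/(c\tau+d)$ the function $F_{\tau}$ transforms by a rescaling of $(u,v)$ by $(c\tau+d)$, up to the standard exponential automorphy factor, and translating this into the parametrization $f(u,v)=\delta F_{\tau}(u/\alpha,v/\beta)$ produces exactly the stated transformation $(\alpha,\beta,\delta,\tau)\mapsto(\alpha(c\tau+d),\beta(c\tau+d),\delta/(c\tau+d),(a\tau+b)/(c\tau+d))$. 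Conversely, the rigidity of the factorization $C_f=f(u_1,v_1)f(u_2,v_2)$ shows that any two admissible parameter choices yielding the same $C_f$ must be related in this way, which is the content of the uniqueness clause.
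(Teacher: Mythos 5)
Your proposal follows essentially the same route as the paper: one first shows (Proposition \ref{prop:periodsFay}) that the period relations for $C_f$ are equivalent to the Fay identity for $f$ --- though the $1+S$ relation amounts only to antisymmetry $f(-u,-v)=-f(u,v)$, not to symmetry in $u\leftrightarrow v$ --- and then invokes the classification of Fay solutions (Theorem \ref{thm:main}), whose general solutions carry an extra factor $e^{\gamma uv}$ that you omit but which cancels in $C_f$ exactly because of your observation $u_1v_1+u_2v_2=0$. Modulo these two small corrections, your argument matches the paper's proof.
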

The basic idea is that, via Theorem \ref{thm:Zagier} above, the period relations for $C$ are equivalent to the Fay identity for $F_{\tau}(u,v)$, Equation \eqref{eqn:Fay}, which is a consequence of Fay's trisecant identity for theta functions, \cite[p.34, eqn. (45)]{Fay:Theta}. More precisely,  $C_f$ satisfies the period relations if and only if $f$ satisfies the Fay identity, see Proposition \ref{prop:periodsFay}.
We are therefore led to characterize solutions of the Fay identity, for which we have the following result.
\begin{thm} \label{thm:main}
Assume that $f(u,v)=\sum_{m,n>\!\!>-\infty}a_{m,n}u^mv^n$ satisfies the Fay identity, Equation \eqref{eqn:Fay}.
\begin{enumerate}
\item[(i)]
If $(a_{3,0},a_{5,0})=(0,0)$, then there exist unique $\alpha,\beta,\gamma \in \bC$ such that
\begin{equation} \label{eqn:thmmainpolar}
f(u,v)=e^{\gamma uv}\left(\frac{\alpha}{u}+\frac{\beta}{v}\right).
\end{equation}
\item[(ii)]
If $(a_{3,0},a_{5,0}) \neq (0,0)$, then there exist $\alpha,\beta,\delta \in \bC^{\times}$, $\gamma \in \bC$ and $\tau \in \fH \cup \{i\infty\}$ such that
\begin{equation} \label{eqn:thmmainelliptic}
f(u,v)=\delta e^{\gamma uv}F_{\tau}\left(\frac u\alpha, \frac v\beta\right).
\end{equation}
Moreover, if $\alpha',\beta',\gamma',\delta',\tau'$ are a different choice of parameters as above, then there exists $\left(\begin{smallmatrix}a&b\\c&d\end{smallmatrix}\right) \in \SL_2(\bZ)$ such that
\begin{equation}
(\alpha',\beta',\gamma',\delta',\tau')=\left( \alpha(c\tau+d),\beta(c\tau+d),\gamma-\frac{c/2\pi i}{\alpha\beta(c\tau+d)},\frac{\delta}{c\tau+d},\frac{a\tau+b}{c\tau+d} \right).
\end{equation}
\end{enumerate}
\end{thm}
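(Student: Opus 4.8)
\section*{Proof proposal}

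The plan is to classify the solutions of the Fay identity by first identifying its symmetry group, then pinning down the polar behaviour, and finally reading off the remaining moduli from a single Taylor slice. First I would record the symmetries of Equation \eqref{eqn:Fay}. A direct substitution should show that if $f(u,v)$ solves \eqref{eqn:Fay}, then so does $\delta e^{\gamma uv}f(u/\alpha, v/\beta)$ for every $\alpha, \beta, \delta \in \bC^{\times}$ and $\gamma \in \bC$: the overall factor $\delta$ survives because \eqref{eqn:Fay} is homogeneous of degree two in $f$; the rescalings $u \mapsto u/\alpha$, $v \mapsto v/\beta$ survive because the identity combines $u$-arguments only with $u$-arguments and $v$-arguments only with $v$-arguments; and the Gaussian factor $e^{\gamma uv}$ survives because the prefactors it produces on either side of \eqref{eqn:Fay} agree by bilinearity of the exponent and cancel. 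This reduces the problem to exhibiting one solution per orbit of the resulting four-parameter gauge group, and it is simultaneously the source of the stated ambiguity: the $\SL_2(\bZ)$-indeterminacy will come from matching these gauge transformations against the modular transformation law of $F_{\tau}$, whose cocycle $(c\tau+d)$ and Gaussian factor reproduce exactly the claimed map $(\alpha, \beta, \gamma, \delta, \tau) \mapsto (\alpha(c\tau+d), \beta(c\tau+d), \gamma - (c/2\pi i)/(\alpha\beta(c\tau+d)), \delta/(c\tau+d), (a\tau+b)/(c\tau+d))$.

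Next I would determine the polar part. Specialising \eqref{eqn:Fay} to singular configurations of its arguments, concretely by extracting residues along $u=0$ and $v=0$, should force $f$ to have at worst simple poles there with residues constrained by the identity. After normalising these residues using the gauge freedom in $\alpha, \beta, \delta$, I expect to be reduced to $f(u,v) = u^{-1} + v^{-1} + (\text{holomorphic})$; this normalisation is precisely what realises $F_{\tau}$, respectively the rational function $u^{-1}+v^{-1}$, as the ``fundamental solution'' of the Fay identity.

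With the polar part fixed, I would extract the $v^0$-coefficient of $f$ as a function of $u$. For $F_{\tau}$ this slice is the logarithmic derivative $\partial_u \log \theta_{\tau}(u)$, whose odd Taylor coefficients are, up to explicit nonzero constants, the Eisenstein series $G_{2k}(\tau)$; in particular $a_{3,0}$ and $a_{5,0}$ recover $G_4(\tau)$ and $G_6(\tau)$, while the exponential gauge $e^{\gamma uv}$ affects only the $u^1$-coefficient of this slice and so cannot interfere with the pair $(a_{3,0}, a_{5,0})$. I would then show, via the Fay identity rewritten as a recursion on the Taylor coefficients, that the entire series $f$ is reconstructed from this one slice, so that the Eisenstein data determines $f$ up to the gauge group.

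The dichotomy then follows. If $(a_{3,0}, a_{5,0}) = (0,0)$ then $G_4 = G_6 = 0$, which happens for no $\tau \in \fH \cup \{i\infty\}$; this case lies ``beyond the cusp'', the reconstruction collapses to the bare polar part, and undoing the gauge gives case (i) with $f = e^{\gamma uv}(\alpha u^{-1} + \beta v^{-1})$, uniqueness of $(\alpha, \beta, \gamma)$ being read off directly from low-order coefficients. If $(a_{3,0}, a_{5,0}) \neq (0,0)$, then after fixing the gauge these coefficients become $(G_4(\tau), G_6(\tau))$ up to fixed nonzero constants; the classical fact that $(G_4, G_6)$ determine the period lattice, hence $\tau$ up to $\SL_2(\bZ)$ (with the cusp $i\infty$ arising where the discriminant degenerates while $(G_4, G_6) \neq (0,0)$, yielding the trigonometric Kronecker function), places us in case (ii), the residual indeterminacy matching the modular transformation law recorded in the first paragraph. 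To my mind the main obstacle is the rigidity underlying the recursion: proving that Fay propagates the $v^0$-slice to all of $f$ with no freedom beyond the gauge group, and that the low-order coefficients reconstruct an honest modular $\tau$ rather than a merely formal solution with no geometric origin. This is where I expect to lean most heavily on the explicit Eisenstein expansion of $F_{\tau}$ and on the established analytic theory of the Kronecker function.
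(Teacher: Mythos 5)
Your overall strategy mirrors the paper's: exploit the gauge symmetries of the Fay identity (the paper's Corollary \ref{cor:solution}), pin down the polar part by taking residues (Proposition \ref{prop:Faybasic}), and then use the surjectivity of $\tau\mapsto[G_4^3:G_6^2]$ to match the data $(a_{3,0},a_{5,0})$ and derive the $\SL_2(\bZ)$-ambiguity from the modular transformation law. However, there is a genuine gap at the step you yourself flag as the ``main obstacle'', and the version of the rigidity statement you propose to prove is too weak to close it. You plan to show that $f$ is reconstructed from its entire $v^0$-slice $\{a_{m,0}\}_{m\ge -1}$. Even granting that, you cannot conclude: in case (ii), after choosing $\alpha,\tau$ so that $(a_{3,0},a_{5,0})$ matches $(b_{3,0},b_{5,0})$ of a candidate $\delta e^{\gamma uv}F_\tau(u/\alpha,v/\beta)$, you would still need to know that $a_{7,0},a_{9,0},\dots$ equal the corresponding multiples of $G_8(\tau),G_{10}(\tau),\dots$; and in case (i) you would need $a_{7,0}=a_{9,0}=\dots=0$, neither of which follows from $(a_{3,0},a_{5,0})$ alone if the slice itself is unconstrained. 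What is actually needed is the paper's Theorem \ref{thm:generators}: a solution with $a_{-1,0}a_{0,-1}\ne 0$ is determined by just \emph{five} coefficients, $a_{0,-1}$ and $a_{m,0}$ for $m\in\{-1,1,3,5\}$. Only with this finite determination does matching five coefficients of $f$ against a known solution force all coefficients (including the rest of the $v^0$-slice) to agree.

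Proving that finite determination is the technical core of the paper and is not a routine recursion. It rests on the explicit congruences of Proposition \ref{prop:coefficients} for the coefficients $c_{m_1,m_2,n_1,n_2}$ of the formal Fay relation modulo $\fp_k^2$, and in particular on the linear independence of the row vectors $(-(k+3),2)$ and $\bigl(-\binom{k+2}{3}-1,\binom{k}{2}+\delta_{k,2}\bigr)$ for $k=d-1\ge 6$, which is what shows that $\bar A_{d,0}$ for $d\ge 7$ is itself forced by lower-degree data rather than being a free parameter. Your proposal gestures at ``the Fay identity rewritten as a recursion'' but supplies neither the specific relations nor the nondegeneracy that makes the recursion close up after degree $5$; without this, the dichotomy on $(a_{3,0},a_{5,0})$ does not yield the classification. (Your remaining steps --- the gauge group, the residue analysis, the identification of the $v^0$-slice with $\partial_u\log\theta_\tau$ and the Eisenstein coefficients, and the $\SL_2(\bZ)$ uniqueness via the non-constant modular function $G_4^3/G_6^2$ --- are all consistent with the paper's argument.)
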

The idea of proof is that the Fay identity implies recurrence relations for the coefficients of $f$ such that every solution is uniquely determined by at most five of its coefficients (see Proposition \ref{prop:Faybasic2} and Theorem \ref{thm:generators} for the precise statements). Varying the parameters $\alpha,\beta,\gamma,\delta,\tau$ suitably, we can arrange that $f$ is equal to one of the functions in Theorem \ref{thm:main}.

After a previous version of this manuscript had been submitted, the author was informed that the existence part of Theorem \ref{thm:main} is equivalent to a result of Polishchuk, \cite[Theorem 5]{Polishchuk:YB}. To be precise, Polishchuk works with the (scalar) associative Yang--Baxter equation which is equivalent to the Fay identity, \cite{Polishchuk:Tangents}, and classifies solutions in the space of meromorphic functions defined in a neighborhood of $(0,0)$ instead of formal Laurent series.
The two proofs are quite similar but ours may still be of independent interest as it yields slightly more information about the coefficients of solutions to the Fay identity (see Proposition \ref{prop:coefficients}). This in turn sheds some light on how solutions to the associative Yang--Baxter equation can be constructed algorithmically.

\subsection{Content}
Section \ref{sec:2} is preliminary; we recall the definition of the Kronecker function $F_{\tau}(u,v)$ and state the Fay identity. This identity is then studied in detail in Section \ref{sec:3} which culminates in the proof of Theorem \ref{thm:main}. In Section 4 we explain the relation between Fay identity and period relations, building on Zagier's fundamental result, and finish by giving a proof of Theorem \ref{thm:main2}.
\smallskip

{\bf Acknowledgements:} This work was done while the author was a JSPS Postdoctoral Fellow, partly supported by JSPS KAKENHI Grant No. 17F17020. The author would like to thank his academic host, Professor Masanobu Kaneko, for his support and fruitful discussions on the contents of this manuscript, as well as Ulf K\"uhn and the referees for crucial comments and helpful feedback. Final corrections were made while the author was a Postdoctoral Research Assistant at University of Oxford, partly supported by ERC grant 724638.
\smallskip

{\bf Notation:}
Given variables $X_1,\ldots,X_n$ and a ring $R$, we will denote by $R((X_1,\ldots,X_n))$ the $R$-algebra of formal Laurent series of the form
\begin{equation}
\sum_{m_1,\ldots,m_n>\!\!>-\infty}a_{m_1,\ldots,m_n}X_1^{m_1}\ldots X_n^{m_n}, \quad \mbox{with } a_{m_1,\ldots,m_n} \in R.
\end{equation}

\section{The Kronecker function and its functional equations} \label{sec:2}

The reference for this section is \cite{Zagier:Periods}.

\subsection{The Fay identity for the Kronecker function} \label{ssec:2.1}
Consider the classical odd Jacobi theta function $\theta_{\tau}(u):=\sum_{n\in \bZ}(-1)^nq^{\frac 12\left(n+\frac 12\right)^2}e^{\left(n+\frac 12\right)u}$. It is entire, has simple zeros exactly for $u \in 2\pi i(\bZ+\bZ\tau)$ and satisfies $\theta_{\tau}(u+2\pi i(m\tau+r))=(-1)^{m+r}q^{-\frac{m^2}{2}}e^{-mu}\theta_{\tau}(u)$, for $m,r\in \bZ$.
\begin{dfn}
The Kronecker function is the meromorphic function $F_{\tau}: \bC^2 \rightarrow \bC$, defined by
\begin{equation}
F_{\tau}(u,v)=\frac{\theta'_{\tau}(0)\theta_{\tau}(u+v)}{\theta_{\tau}(u)\theta_{\tau}(v)}.
\end{equation}
\end{dfn}
From the properties of $\theta_{\tau}$ mentioned above, it follows that $F_{\tau}(u,v)$ has simple poles if $u$ or $v$ is in $2\pi i(\bZ+\bZ\tau)$, simple zeros for $u+v \in 2\pi i(\bZ+\bZ\tau)$ and satisfies $F_{\tau}(u+2\pi i(n\tau+s),v+2\pi i(m\tau+r))=q^{-mn}e^{-mu-nv}F_{\tau}(u,v)$, for $m,n,r,s\in \bZ$. Also, at the cusp $i\infty$, the Kronecker function degenerates to a trigonometric function:
\begin{equation}
F_{\tau}(u,v)\vert_{\tau=i\infty}=\frac 12\left(\coth\left( \frac u2\right)+\coth\left( \frac v2 \right)\right).
\end{equation}
\begin{rmk}
The above version of the Kronecker function is the one given in \cite{Zagier:Periods}. Other sources such as \cite{BL:MEP,Polishchuk:YB} use the function $2\pi iF_{\tau}(2\pi iu,2\pi iv)$ instead.
\end{rmk}
We are interested in functional equations satisfied by $F_{\tau}$. Since $\theta_{\tau}(-u)=-\theta_{\tau}(u)$, the Kronecker function is antisymmetric, i.e.
$
F_{\tau}(u,v)+F_{\tau}(-u,-v)=0.
$
More interestingly, it satisfies the following three-term functional equation, \cite{BL:MEP}, Proposition 5.(iii).\footnote{Note that our version is slightly different, see Remark \ref{rmk:Fay} for more details.}
\begin{prop} \label{prop:Fay}
The Kronecker function satisfies the Fay identity,
\begin{equation} \label{eqn:Fay}
F_{\tau}(u_1,v_1)F_{\tau}(u_2,v_2)+F_{\tau}(-u_2,v_1-v_2)F_{\tau}(u_1+u_2,v_1)+F_{\tau}(-u_1-u_2,-v_2)F_{\tau}(u_1,v_1-v_2)=0.
\end{equation}
\end{prop}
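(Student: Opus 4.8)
The plan is to establish the Fay identity \eqref{eqn:Fay} by reducing it to Fay's trisecant identity for theta functions, or, more economically, by exploiting the special structure of $F_\tau$ as a ratio of theta functions. Writing $F_\tau(u,v)=\theta'_\tau(0)\,\theta_\tau(u+v)/(\theta_\tau(u)\theta_\tau(v))$, each of the three summands in \eqref{eqn:Fay} becomes a product of two such ratios. First I would clear denominators: multiply the whole identity by the common denominator $\theta_\tau(u_1)\theta_\tau(u_2)\theta_\tau(v_1)\theta_\tau(v_2)\theta_\tau(u_1+u_2)\theta_\tau(v_1-v_2)$, so that \eqref{eqn:Fay} becomes an identity among products of theta functions with no poles. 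The resulting relation is a $\theta'_\tau(0)^{-2}$-normalized polynomial identity in theta values, and the goal reduces to verifying that a certain alternating sum of triple products of $\theta_\tau$ vanishes identically in $u_1,u_2,v_1,v_2$.

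The key structural input is the classical three-term (Weierstrass/Riemann) relation for odd theta functions, which for Jacobi's $\theta_\tau$ takes the form of a two-variable addition formula expressing $\theta_\tau(x+y)\theta_\tau(x-y)\theta_\tau(z+w)\theta_\tau(z-w)$-type products as sums of analogous products; this is exactly the content of Fay's trisecant identity specialized to genus one, \cite[p.34, eqn. (45)]{Fay:Theta}. Rather than invoke Fay's formula in full generality, a cleaner route is to fix three of the four variables and regard the cleared identity as an equation among elliptic (doubly periodic) functions in the remaining variable. Using the quasi-periodicity $\theta_\tau(u+2\pi i(m\tau+r))=(-1)^{m+r}q^{-m^2/2}e^{-mu}\theta_\tau(u)$, I would check that, after clearing denominators, the left-hand side of \eqref{eqn:Fay} is genuinely elliptic in (say) $v_1$ with respect to the lattice $2\pi i(\bZ+\bZ\tau)$, once the accumulated automorphy factors from the three terms are shown to match.

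The main obstacle is precisely this bookkeeping of automorphy factors and poles. As a function of $v_1$, each summand a priori has poles, and the exponential/quasi-period multipliers $q^{-mn}e^{-mu-nv}$ must be shown to be consistent across the three terms so that the total is a well-defined elliptic function. Once that is established, I would argue that the only possible poles (coming from the zeros of the $\theta_\tau$ in the denominators, i.e. at the lattice points) are in fact removable: the residues at $v_1\in 2\pi i(\bZ+\bZ\tau)$ cancel among the three terms by a direct residue computation, again using $\theta_\tau(-u)=-\theta_\tau(u)$ and the simplicity of the zeros of $\theta_\tau$. An elliptic function with no poles is constant, and evaluating at a convenient point (for instance letting $v_1\to v_2$, where two terms acquire a simple zero/pole that can be balanced, or specializing to the trigonometric degeneration at $\tau=i\infty$ where $F_\tau$ becomes $\tfrac12(\coth(u/2)+\coth(v/2))$) forces the constant to be $0$. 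I would therefore expect to carry out the following steps in order: (1) clear denominators and record the cleared identity; (2) verify ellipticity in one chosen variable by matching quasi-period multipliers; (3) show all residues at the lattice poles cancel, so the function is holomorphic and hence constant; (4) evaluate at a boundary/degenerate point to conclude the constant vanishes, and invoke antisymmetry to tidy any sign conventions.
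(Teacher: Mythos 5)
Your overall strategy is the classical one and is genuinely different from the paper's. The paper also clears denominators, but then makes the substitution $\alpha_0=u_1+u_2+v_2$, $\alpha_1=-u_2+v_1-v_2$, $\alpha_2=-\alpha_0-\alpha_1$, $\beta_0=u_2$, $\beta_1=v_2$, $\beta_2=-\beta_0-\beta_1$, which turns the cleared identity into the symmetric four-fold theta relation $\sum_{i\in\bZ/3\bZ}\theta_\tau(\alpha_i)\theta_\tau(\beta_i)\theta_\tau(\alpha_{i-1}+\beta_{i+1})\theta_\tau(\alpha_{i+1}-\beta_{i-1})=0$ and then simply cites this as \cite[Proposition 5]{Zagier:Periods}. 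Your route is self-contained where the paper's is a reduction to a quoted identity; that is a legitimate trade-off, and your residue computation is correct: in the variable $v_1$ the (uncleared) left-hand side has simple poles only along $v_1\in 2\pi i(\bZ+\bZ\tau)$ (from terms 1 and 2) and $v_1-v_2\in 2\pi i(\bZ+\bZ\tau)$ (from terms 2 and 3), and in both cases the two residues cancel precisely by $F_\tau(u,v)+F_\tau(-u,-v)=0$.

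There is, however, a genuine error in step (2) that propagates into step (4): the left-hand side is \emph{not} elliptic in $v_1$. Using $F_\tau(u,v+2\pi i(m\tau+r))=e^{-mu}F_\tau(u,v)$, each of the three terms picks up the \emph{same} factor $e^{-mu_1}$ under $v_1\mapsto v_1+2\pi i(m\tau+r)$ (e.g.\ for term 2 the factors $e^{mu_2}$ and $e^{-m(u_1+u_2)}$ combine to $e^{-mu_1}$), so the automorphy factors match but are nontrivial; the cleared theta-product version is even worse, being a ``degree two'' theta function in $v_1$, hence very far from constant. Consequently ``an elliptic function with no poles is constant'' does not apply, and the evaluation at a degenerate point in step (4) cannot be the mechanism that kills the function. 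The standard repair is easy but must be made explicitly: after the residue cancellation the left-hand side $G(v_1)$ is entire with $G(v_1+2\pi i)=G(v_1)$ and $G(v_1+2\pi i\tau)=e^{-u_1}G(v_1)$, i.e.\ a holomorphic section of a degree-zero line bundle that is nontrivial for $u_1\notin 2\pi i(\bZ+\bZ\tau)$, hence $G\equiv 0$ (equivalently: for $\mathrm{Re}\,u_1\ge 0$ the function is bounded, so constant by Liouville, and the relation $c=e^{-u_1}c$ forces $c=0$; then extend to all $u_1$ by analytic continuation). Alternatively, divide the identity by $F_\tau(u_1,v_1)$ to obtain a genuinely elliptic function of $v_1$ and redo the pole count there. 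With this correction your argument goes through.
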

\begin{proof}
We give a proof for convenience of the reader. Writing out the definition of $F_{\tau}(u,v)$ and multiplying by the common denominator of the left hand side of \eqref{eqn:Fay}, we see that \eqref{eqn:Fay} is equivalent to
\begin{equation}\label{eqn:propFay2}
\begin{aligned}
0&=\theta_{\tau}(u_1+v_1)\theta_{\tau}(u_2+v_2)\theta_{\tau}(u_1+u_2)\theta_{\tau}(v_1-v_2)\\
&-\theta_{\tau}(-u_2+v_1-v_2)\theta_{\tau}(u_1+u_2+v_1)\theta_{\tau}(v_2)\theta_{\tau}(u_1)\\
&+\theta_{\tau}(-u_1-u_2-v_2)\theta_{\tau}(u_1+v_1-v_2)\theta_{\tau}(u_2)\theta_{\tau}(v_1),
\end{aligned}
\end{equation}
where we also used $\theta_{\tau}(-u)=-\theta_{\tau}(u)$. Now substituting
\begin{alignat}{3}
&\alpha_0=u_1+u_2+v_2, \quad &&\alpha_1=-u_2+v_1-v_2, \quad &&\alpha_2=-\alpha_0-\alpha_1=-u_1-v_1,\\
&\beta_0=u_2, \quad &&\beta_1=v_2, &&\beta_2=-\beta_0-\beta_1=-u_2-v_2,
\end{alignat}
and again using antisymmetry of $\theta_{\tau}$, we can write \eqref{eqn:propFay2} in the more symmetric form
\begin{equation}
\sum_{i \in \bZ/3\bZ}\theta_{\tau}(\alpha_i)\theta_{\tau}(\beta_i)\theta_{\tau}(\alpha_{i-1}+\beta_{i+1})\theta_{\tau}(\alpha_{i+1}-\beta_{i-1})=0,
\end{equation}
and this is precisely \cite[Proposition 5]{Zagier:Periods}.
\end{proof}
Slightly more generally, we have the following result.
\begin{cor} \label{cor:solution}
The functions
\begin{equation}
e^{\gamma uv}\left( \frac{\alpha}{u}+\frac{\beta}{v} \right), \quad \mbox{and} \quad \delta e^{\gamma uv}F_{\tau}\left( \frac{u}{\alpha},\frac{v}{\beta} \right),
\end{equation}
both satisfy the Fay identity, for all $\alpha,\beta,\gamma \in \bC$ in the first case and for all  $\alpha,\beta \in \bC^{\times}$, $\gamma,\delta \in \bC$ and $\tau \in \fH \cup \{i\infty\}$ in the second.
\end{cor}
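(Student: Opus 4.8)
The plan is to isolate the three elementary operations that relate the functions in the statement to $F_{\tau}$ --- multiplication by a scalar $\delta$, the independent rescalings $u \mapsto u/\alpha$ and $v \mapsto v/\beta$, and multiplication by the Gaussian factor $e^{\gamma uv}$ --- and to check that each of them \emph{separately} preserves the Fay identity \eqref{eqn:Fay}. I would package this as a single invariance statement: if $G(u,v)$ is any solution of \eqref{eqn:Fay}, then so is $\delta e^{\gamma uv}G(u/\alpha,v/\beta)$ for all $\alpha,\beta \in \bC^{\times}$ and $\gamma,\delta \in \bC$. Granting this, the second family in the corollary follows at once by taking $G=F_{\tau}$ and invoking Proposition \ref{prop:Fay}.

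For the invariance statement I would argue as follows. The scalar $\delta$ is harmless because every summand of \eqref{eqn:Fay} is a product of exactly two factors, so the left-hand side is homogeneous of degree $2$ and merely picks up an overall $\delta^2$. The rescaling is equally transparent once one observes that in \eqref{eqn:Fay} every first coordinate fed into $G$ (namely $u_1$, $u_2$, $-u_2$, $u_1+u_2$, $-u_1-u_2$) is a linear combination of $u_1,u_2$ alone, while every second coordinate ($v_1$, $v_2$, $v_1-v_2$, $-v_2$) is a linear combination of $v_1,v_2$ alone; hence the substitution $u_i \mapsto u_i/\alpha$, $v_i \mapsto v_i/\beta$ turns the Fay expression for $G(u/\alpha,v/\beta)$ into the Fay expression for $G$ evaluated at rescaled arguments, which vanishes by hypothesis.

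The only step requiring a genuine calculation is the Gaussian factor, and this is the point I expect to be the crux. Writing out the three summands of \eqref{eqn:Fay} for $e^{\gamma uv}G$, each acquires an exponential prefactor whose exponent is $\gamma$ times the sum of the two products of its arguments. I would compute these three exponents directly,
\begin{align}
&u_1 v_1 + u_2 v_2, \\
&(-u_2)(v_1-v_2) + (u_1+u_2)v_1, \\
&(-u_1-u_2)(-v_2) + u_1(v_1-v_2),
\end{align}
and verify that all three collapse to the common value $u_1 v_1 + u_2 v_2$. This coincidence is exactly the compatibility of the quadratic form $uv$ with the argument structure of the Fay identity; once it is checked, the prefactor $e^{\gamma(u_1 v_1 + u_2 v_2)}$ factors out of all three summands and the identity for $e^{\gamma uv}G$ reduces to that for $G$.

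Finally, by the exponential invariance just established it suffices to treat the first family with $\gamma=0$, i.e.\ to show that $\frac{\alpha}{u}+\frac{\beta}{v}$ solves \eqref{eqn:Fay} (one may also view this function as the degeneration $\lim_{\lambda \to 0}\lambda F_{\tau}(\lambda u/\alpha,\lambda v/\beta)$, which gives a conceptual reason, but the direct check is shorter). Substituting and splitting the left-hand side by bilinearity into its $\alpha^2$, $\beta^2$ and $\alpha\beta$ parts, the $\alpha^2$ part is the Fay expression for $u^{-1}$ and the $\beta^2$ part that for $v^{-1}$; each vanishes after clearing the denominator $u_1 u_2(u_1+u_2)$, respectively $v_1 v_2(v_1-v_2)$, by a one-line telescoping cancellation. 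The mixed $\alpha\beta$ part consists of six rational monomials that cancel in pairs, since $\frac{1}{u_1 v_2}$, $\frac{1}{u_2 v_1}$ and $\frac{1}{(u_1+u_2)(v_1-v_2)}$ each occur once with each sign. This completes the verification, and beyond the Gaussian computation no step presents any real obstacle.
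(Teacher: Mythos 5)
Your proposal is correct and follows essentially the same route as the paper: the central point in both is that the exponent $u_1v_1+u_2v_2$ is invariant under the two argument substitutions occurring in the Fay identity (so the Gaussian prefactor factors out), after which the first family is checked by a direct partial-fractions computation and the second follows from Proposition \ref{prop:Fay}. You merely spell out the trivial $\delta$- and $(\alpha,\beta)$-rescaling steps and the partial-fraction cancellations in more detail than the paper does.
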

\begin{proof}
The key observation is that the product $e^{\gamma u_1v_1}e^{\gamma u_2v_2}$ is invariant under the following two linear transformations which occur in the Fay identity:
\begin{equation}
\begin{aligned}
(u_1,v_1,u_2,v_2) \mapsto (-u_2,v_1-v_2,u_1+u_2,v_1),\\
(u_1,v_1,u_2,v_2) \mapsto (-u_1-u_2,-v_2,u_1,v_1-v_2).
\end{aligned}
\end{equation}
The corollary then follows from partial fractions in the first case and from Proposition \ref{prop:Fay} in the second.
\end{proof}

\section{Algebraic structure of the Fay identity} \label{sec:3}

In this section, we always let $f(u,v)=\sum_{m,n>\!\!>-\infty}a_{m,n}u^mv^n \in \bC((u,v))$ be a formal Laurent series. The goal of this section is to derive constraints on the coefficients $a_{m,n}$ imposed by the Fay identity. Our main results (Theorem \ref{thm:generators} and Proposition \ref{prop:Faybasic2} below) show that if $f$ satisfies the Fay identity, it is uniquely determined by at most five of its coefficients.
\subsection{Some basic implications of the Fay identity}
We begin by showing how the Fay identity implies the vanishing of many of the coefficients $a_{m,n}$.
\begin{prop} \label{prop:Faybasic}
If $f$ satisfies the Fay identity, then
\begin{equation}
f(u,v)=\frac{a_{-1,0}}{u}+\frac{a_{0,-1}}{v}+\sum_{m,n\geq 0}a_{m,n}u^mv^n, \quad \mbox{with }a_{m,n}=0, \, \mbox{if } m+n \in 2\bZ.
\end{equation}
In particular, $f(-u,-v)=-f(u,v)$. Moreover, if $(a_{-1,0},a_{0,-1})=(0,0)$, then $f=0$.
\end{prop}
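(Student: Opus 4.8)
The plan is to regard the Fay identity \eqref{eqn:Fay}, with $F_\tau$ replaced by $f$, as a formal identity in $\bC((u_1,v_1,u_2,v_2))$ — expanding each composite argument such as $u_1+u_2$ or $v_1-v_2$ as a Laurent series in the ``second'' pair $u_2,v_2$ — and to extract constraints on the $a_{m,n}$ by comparing coefficients of monomials. Two observations do most of the work. First, the three summands of \eqref{eqn:Fay} form a single orbit under the order-six linear substitution $\sigma\colon(u_1,v_1,u_2,v_2)\mapsto(-u_2,v_1-v_2,u_1+u_2,v_1)$, whose cube is the total negation $(u_i,v_i)\mapsto(-u_i,-v_i)$. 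Applying $\sigma$ to \eqref{eqn:Fay} and subtracting the original identity (legitimate once the expansion convention is fixed so that $\sigma$ acts on the coefficient ring) leaves
\[
f(u_1,v_1)f(u_2,v_2)=f(-u_1,-v_1)f(-u_2,-v_2).
\]
Because $\bC((u_1,v_1,u_2,v_2))$ is an integral domain, this forces $f(u,v)=\epsilon\,f(-u,-v)$ for a single sign $\epsilon\in\{\pm1\}$ whenever $f\neq0$; equivalently $\epsilon=(-1)^{d_0}$, where $d_0$ is the lowest total degree occurring in $f$.

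The second observation isolates the most singular part of $f$. Let $q$ be the smallest $v$-exponent occurring and set $f_q(u):=\sum_m a_{m,q}u^m$. In the chosen expansion the middle summand of \eqref{eqn:Fay} involves $v_2$ only through $v_1-v_2$ and so contributes nothing below $v_2^0$; hence, if $q<0$, the coefficient of $v_2^{q}$ in \eqref{eqn:Fay} collapses to
\[
f(u_1,v_1)\bigl(f_q(u_2)+(-1)^q f_q(-u_1-u_2)\bigr)=0.
\]
Since $f\neq0$ the bracket vanishes identically in $u_1,u_2$; as $f_q(u_2)$ is independent of $u_1$ whereas $f_q(-u_1-u_2)$ is not unless $f_q$ is constant, we conclude that $f_q$ is a constant and that $q$ is odd. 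Thus the bottom $v$-level of $f$ is the single term $a_{0,q}v^q$ with $q$ odd and negative. This term is anti-invariant under $(u,v)\mapsto(-u,-v)$, which pins the sign to $\epsilon=-1$; therefore $f(-u,-v)=-f(u,v)$ and $a_{m,n}=0$ whenever $m+n$ is even.

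It then remains to sharpen ``$q$ odd and negative'' to $q=-1$, to carry out the analogous control in the variable $u$ (where the extraction is less clean, since the middle summand now does reach the lowest $u_2$-power), and to exclude every remaining mixed negative term, so that the only surviving poles are $a_{-1,0}/u$ and $a_{0,-1}/v$. My approach here is to substitute the bottom-level data back into \eqref{eqn:Fay} and read off the resulting recurrences among the coefficients $c_n(u):=\sum_m a_{m,n}u^m$ (writing $f=\sum_n c_n(u)v^n$), then induct on the pole order to propagate the constraints through the cross terms of all three products. This bookkeeping is, I expect, the main obstacle: the interaction of the three summands under the fixed expansion convention must be tracked carefully, and it is exactly this analysis that feeds into the later recurrence relations (Proposition \ref{prop:Faybasic2} and Theorem \ref{thm:generators}).

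Finally, the vanishing statement follows cleanly once the pole structure is in hand. If $(a_{-1,0},a_{0,-1})=(0,0)$ then $f$ is an honest power series, so $\phi(u):=f(u,0)$ is a power series as well and the specializations $v_2=0$ and $v_1=v_2=0$ in \eqref{eqn:Fay} are legitimate. Setting $v_1=v_2=0$ shows that $\phi$ satisfies the one-variable Fay identity
\[
\phi(u_1)\phi(u_2)+\phi(-u_2)\phi(u_1+u_2)+\phi(-u_1-u_2)\phi(u_1)=0,
\]
and comparing lowest homogeneous parts forces $\phi=0$, because the relevant polynomial $u_1^ku_2^k+(-1)^ku_2^k(u_1+u_2)^k+(-1)^k(u_1+u_2)^ku_1^k$ is nonzero (e.g.\ at $u_1=u_2=1$) for every $k\ge0$. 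Substituting only $v_2=0$ into \eqref{eqn:Fay} then leaves $f(-u_2,v_1)f(u_1+u_2,v_1)=0$, and integrality of the Laurent series ring gives $f=0$.
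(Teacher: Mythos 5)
Your opening moves are sound, and one of them is genuinely nicer than it needs to be: the observation that the three summands of \eqref{eqn:Fay} form a single orbit under $\sigma$ with $\sigma^3=-\mathrm{id}$, so that subtracting the transformed identity yields $f(u_1,v_1)f(u_2,v_2)=f(-u_1,-v_1)f(-u_2,-v_2)$ and hence a common parity for all nonzero coefficients, is a valid alternative to the paper's argument for antisymmetry (the paper instead takes residues of \eqref{eqn:Fay} at $u_1=0$ and $v_1=0$ and uses $(a_{-1,0},a_{0,-1})\neq(0,0)$). The extraction of the bottom $v_2$-level and the concluding vanishing argument are also correct.

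However, there is a genuine gap, and you have flagged it yourself: nothing in the write-up proves that $q=-1$, that the analogous lowest $u$-exponent equals $-1$, or that the $v$-levels strictly between $q$ and $0$ (and the corresponding $u$-levels) carry no further negative-exponent or non-constant terms. Your computation controls only the single extremal level $c_q(u)=a_{0,q}$; for instance it does not by itself exclude something like $f=v^{-3}+(\text{terms with }n\ge -1)$. The deferred ``induction on the pole order'' is not routine bookkeeping: already the coefficient of $v_2^{q+1}$ produces a relation of the shape $f(u_1,v_1)\bigl(c_{q+1}(u_2)+c_{q+1}(-u_1-u_2)\bigr)+a_{0,q}\,\partial_{v_1}f(u_1,v_1)=0$, which couples the unknown level to a derivative of all of $f$, and it is not clear the induction closes without a further idea. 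The paper sidesteps this entirely with one trick worth adopting: let $M$ be the exact pole order in $u$ and $g(v)=(u^{M}f(u,v))\vert_{u=0}$ the \emph{full} leading level; multiplying \eqref{eqn:Fay} by $(u_1u_2)^M$ and setting $u_1=u_2=u\to 0$ gives the closed one-variable equation $g(v_1)g(v_2)+(-1/2)^{M}g(v_1-v_2)\bigl(g(v_1)+g(-v_2)\bigr)=0$, from which $M=1$ and $g\equiv a_{-1,0}$ follow by comparing coefficients. Since $M$ is maximal by definition, this kills every term with $u$-exponent $\le -1$ other than $a_{-1,0}u^{-1}$ in one stroke, and the symmetric argument in $v$ completes the pole structure. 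As written, your proposal establishes antisymmetry and the final vanishing statement, but not the main structural claim of the proposition.
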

\begin{proof}
We first show that if $f \neq 0$, then $f$ must have a pole at either $u=0$ or $v=0$. Indeed, if $f$ does not have a pole at $v=0$, then the Fay identity implies
\begin{align}
f(u_1,0)f(u_2,0)+f(-u_2,0)f(u_1+u_2,0)+f(-u_1-u_2,0)f(u_1,0)&=0,\\
f(u_1,v_1)f(u_2,v_1)+f(-u_2,0)f(u_1+u_2,v_1)+f(-u_1-u_2,-v_1)f(u_1,0)&=0.
\end{align}
The first equation implies $f(u,0)=a_{-1,0}u^{-1}$, as can be seen by comparing coefficients. In particular, if $f$ does not have a pole at $u=0$, we must have $f(u,0)=0$, but then the second equation implies $f(u,v)=0$.

Now assume that $f \neq 0$ and let $M,N \in \bZ$ be the largest integers such that $a_{-M,n} \neq 0$ for some $n \in \bZ$ and $a_{m,-N} \neq 0$ for some $m \in \bZ$. Since $f$ has a pole at either $u=0$ or $v=0$, we have $M\geq 1$ or $N\geq 1$. Define
\begin{equation}
g(v):=(u^Mf(u,v))\vert_{u=0}=\sum_{n>\!\!>-\infty}a_{-M,n}v^n, \quad h(u):=(v^Nf(u,v))\vert_{v=0}=\sum_{m>\!\!>-\infty}a_{m,-N}u^m.
\end{equation}
Both $g$ and $h$ are well-defined by construction of $M$ and $N$. Now, if $f$ has a pole at $u=0$, we multiply the Fay identity by $(u_1u_2)^M$ and then set $u=u_1=u_2=0$ to get
\begin{equation}
g(v_1)g(v_2)+\left(\frac{-1}{2}\right)^Mg(v_1-v_2)g(v_1)+\left(\frac{-1}{2}\right)^Mg(-v_2)g(v_1-v_2)=0,
\end{equation}
and it is straightforward to verify that this implies $M=1$ and that $g(v)=a_{-1,0}$. Likewise, if $f(u,v)$ has a pole at $v=0$ (i.e. $N\geq 1$), then a similar argument yields $N=1$ and $h(u)=a_{0,-1}$. This shows that
\begin{equation}
f(u,v)=\frac{a_{-1,0}}{u}+\frac{a_{0,-1}}{v}+\sum_{m,n \geq 0}a_{m,n}u^mv^n,
\end{equation}
with $(a_{-1,0},a_{0,-1}) \neq (0,0)$ if $f\neq 0$. 

It remains to prove that $a_{m,n}=0$ if $m+n$ is even which is equivalent to antisymmetry $f(-u,-v)=-f(u,v)$. For this we may clearly assume that $f\neq 0$. Taking the residues of the Fay identity at $u_1=0$, respectively at $v_1=0$, gives
\begin{align}
a_{-1,0}f(u_2,v_2)+a_{-1,0}f(-u_2,-v_2)&=0,\\
a_{0,-1}f(u_2,v_2)+a_{0,-1}f(-u_2,-v_2)&=0,
\end{align}
and since $(a_{-1,0},a_{0,-1}) \neq (0,0)$, the result follows.
\end{proof}
\begin{rmk} \label{rmk:Fay}
We have already mentioned that our version of the Fay identity is slightly different from the one in \cite[Proposition 5]{BL:MEP}. In particular, the latter does not imply antisymmetry. Indeed, for every $\alpha \neq 0$ the function $f(u,v):=\alpha (\coth(\alpha u)+1)$ satisfies the Fay identity as given in \cite{BL:MEP} (with $u$ corresponding to $\xi$), but does not satisfy Equation \eqref{eqn:Fay}. On the other hand, antisymmetry together with the version of the Fay identity given in \textit{loc.cit.} are equivalent to \eqref{eqn:Fay}. Therefore our version of the Fay identity subsumes antisymmetry as well which is the reason why we prefer to work with it.
\end{rmk}
By Proposition \ref{prop:Faybasic}, we already know that $f=0$, if $(a_{-1,0},a_{0,-1})=(0,0)$. Therefore, the next proposition finishes the proof of Theorem \ref{thm:main} in the special case $a_{-1,0}a_{0,-1}=0$.
\begin{prop} \label{prop:Faybasic2}
Assume that $f \neq 0$ satisfies the Fay identity.
\begin{enumerate}
\item[(i)]If $a_{0,-1}=0$, then $f$ is uniquely determined by $a_{-1,0}$ and $a_{0,1}$, and we have
\begin{equation}
f(u,v)=\frac{a_{-1,0} e^{\gamma uv}}{u}, \quad \mbox{with }\gamma=\frac{a_{0,1}}{a_{-1,0}}.
\end{equation}
\item[(ii)] 
If $a_{-1,0}=0$, then $f$ is uniquely determined by $a_{0,-1}$ and $a_{1,0}$, and we have
\begin{equation}
f(u,v)=\frac{a_{0,-1} e^{\gamma uv}}{v}, \quad \mbox{with }\gamma=\frac{a_{1,0}}{a_{0,-1}}.
\end{equation}
\end{enumerate}
\end{prop}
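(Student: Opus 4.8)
The plan is to prove case (i) in detail and to obtain case (ii) by a mirror specialization of the Fay identity that interchanges the roles of $u$ and $v$. Throughout, write $A:=a_{-1,0}$; note that $A\neq 0$ in case (i), since $f\neq 0$ together with $a_{0,-1}=0$ forces $A\neq 0$ by Proposition \ref{prop:Faybasic}. The existence half is immediate: by Corollary \ref{cor:solution} (taking $\beta=0$) the function $\tfrac{A}{u}e^{\gamma uv}$ solves the Fay identity for every $\gamma$, so the real content is that every solution with $a_{0,-1}=0$ has this shape, with $\gamma$ pinned down as $a_{0,1}/A$.

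First I would exploit two specializations of the Fay identity, both already visible in the proof of Proposition \ref{prop:Faybasic}. Setting $v_1=v_2=0$ and writing $\phi(u):=f(u,0)$ (odd by antisymmetry), the identity collapses to $\phi(u_1)\phi(u_2)=\phi(u_1+u_2)\bigl(\phi(u_1)+\phi(u_2)\bigr)$; passing to $1/\phi$ turns this into Cauchy's additive equation, whence $f(u,0)=A/u$ exactly, i.e. $a_{m,0}=0$ for all $m\geq 0$. Next I would set $v_2=v_1$; using $f(u,0)=A/u$ and antisymmetry, the three terms combine into
\begin{equation}
f(u_1,v)\,f(u_2,v)=A\,\frac{u_1+u_2}{u_1u_2}\,f(u_1+u_2,v).
\end{equation}
Introducing $\psi_v(u):=u\,f(u,v)/A$, a formal power series in $u$ with $\psi_v(0)=1$, this is the exponential equation $\psi_v(u_1)\psi_v(u_2)=\psi_v(u_1+u_2)$. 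Differentiating at $u_1=0$ gives $\psi_v'(u)=\psi_v'(0)\psi_v(u)$, so $\psi_v(u)=e^{c(v)u}$ for a formal power series $c(v)$ with $c(0)=0$ (the latter because $f(u,0)=A/u$). Hence $f(u,v)=\tfrac{A}{u}e^{c(v)u}$, and antisymmetry forces $c$ to be odd.

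The main obstacle is the last step: upgrading $c(v)$ from an arbitrary odd series to a linear one. For this I would substitute the ansatz $f(u,v)=\tfrac{A}{u}e^{c(v)u}$ back into the full Fay identity. After clearing the denominator $u_1u_2(u_1+u_2)$ and dividing out $e^{c(v_1)u_1+c(v_2)u_2}$, all of the $v$-dependence collapses into the single quantity $\lambda:=c(v_1)-c(v_2)-c(v_1-v_2)$, and the identity reduces to
\begin{equation}
u_1+u_2=u_1e^{\lambda u_2}+u_2e^{-\lambda u_1}.
\end{equation}
Comparing the coefficient of $u_1u_2^2$ on both sides yields $\lambda^2/2=0$, so $\lambda\equiv 0$; thus $c(v_1-v_2)=c(v_1)-c(v_2)$, and the only additive formal power series is $c(v)=\gamma v$. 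Reading off the coefficient of $v$ in $\tfrac{A}{u}e^{\gamma uv}=\tfrac{A}{u}+A\gamma v+\cdots$ gives $a_{0,1}=A\gamma$, i.e. $\gamma=a_{0,1}/a_{-1,0}$, which simultaneously settles uniqueness. Finally, case (ii) follows by the identical three-step scheme after replacing the specialization $v_1=v_2=0$ by $u_1=u_2=0$ (giving $f(0,v)=a_{0,-1}/v$) and $v_2=v_1$ by $u_2=0$, which produces the mirrored relation $f(u,v_2)f(u,v_1-v_2)=a_{0,-1}\tfrac{v_1}{v_2(v_1-v_2)}f(u,v_1)$ and hence $f(u,v)=\tfrac{a_{0,-1}}{v}e^{\gamma uv}$ with $\gamma=a_{1,0}/a_{0,-1}$.
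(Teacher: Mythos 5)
Your proof is correct, but it takes a genuinely different route from the paper's. The paper specializes the Fay identity along the full diagonal $u_1=u_2$, $v_1=v_2$ to obtain $f(u,v)^2=2f(u,0)f(2u,v)$, extracts from this a recursion showing that the only possibly nonzero coefficients are $a_{n-1,n}$ and that these are determined by $a_{-1,0}$ and $a_{0,1}$, and then closes the argument by citing Corollary \ref{cor:solution}: since $a_{-1,0}e^{\gamma uv}u^{-1}$ is a solution with the prescribed two leading coefficients, uniqueness forces $f$ to equal it. You instead keep $u_1,u_2$ independent, solve the resulting Cauchy-type and exponential functional equations to reach the closed form $f(u,v)=\tfrac{A}{u}e^{c(v)u}$ directly, and then substitute back into the full Fay identity to force $c$ to be additive, hence linear; I checked the reduction to $u_1+u_2=u_1e^{\lambda u_2}+u_2e^{-\lambda u_1}$ with $\lambda=c(v_1)-c(v_2)-c(v_1-v_2)$, and the coefficient of $u_1u_2^2$ indeed kills $\lambda$. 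Your route costs one extra computation (the back-substitution) but buys an explicit derivation of the formula that bypasses the coefficient induction and uses Corollary \ref{cor:solution} only for the trivial existence half rather than as the final step of uniqueness. One point worth making explicit in a write-up: the specializations $v_1=v_2=0$ and $v_1=v_2$ (and dually $u_1=u_2=0$, $u_2=0$ in case (ii)) are legitimate because $a_{0,-1}=0$ (resp.\ $a_{-1,0}=0$) together with Proposition \ref{prop:Faybasic} guarantees the Fay expression has no poles along the corresponding divisors --- the same justification that underlies the paper's own specialization.
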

Note that $\gamma$ is well-defined in both cases. Indeed, since $f\neq 0$ we must have $a_{-1,0} \neq 0$ in the first case and $a_{0,-1} \neq 0$ in the second one, by Proposition \ref{prop:Faybasic}.
\begin{proof}
We only prove (i), the proof of (ii) is analogous. Since $a_{0,-1}=0$, Proposition \ref{prop:Faybasic} implies that $f(u,v)$ does not have a pole at $v=0$ and the Fay identity with $v:=v_1=v_2$ and $u:=u_1=u_2$ yields
\begin{equation} \label{eqn:prop2}
f(u,v)^2-2f(u,0)f(2u,v)=0.
\end{equation}
Writing out the Laurent expansion of $f$ and using that $f(u,0)=a_{-1,0}u^{-1}$ (see the proof of Proposition \ref{prop:Faybasic}), Equation \eqref{eqn:prop2} is equivalent to
\begin{equation} \label{eqn:prop3}
\left(\sum_{m,n\geq 0}a_{m,n}u^mv^n\right)^2-a_{-1,0}\sum_{m\geq 1, \, n\geq 0}(2^{m+1}-2)a_{m,n}u^{m-1}v^n=0.
\end{equation}
This implies $a_{m,0}=0$ for $m\neq -1$, and then by induction on $n$ we get $a_{m,n}=0$, if $m \neq n-1$. More generally,
\begin{equation}
\left(\sum_{\genfrac{}{}{0pt}{}{1\leq i,j\leq n-1}{i+j=n}}a_{i-1,i}a_{j-1,j}\right)u^{n-2}v^n-a_{-1,0}\sum_{m\geq 1}(2^{m+1}-2)a_{m,n}u^{m-1}v^n=0,
\end{equation}
for every $n$, showing that $a_{n-1,n}$ is recursively determined by $a_{-1,0}$ and $a_{0,1}$, and therefore $f$ itself is uniquely determined by $a_{-1,0}$ and $a_{0,1}$. On the other hand, by Corollary \ref{cor:solution} there exists a solution to the Fay identity for any given values of $a_{-1,0} \in \bC^{\times}$ and $a_{0,1} \in \bC$ namely 
$
\alpha e^{\gamma uv}u^{-1}
$
with $\alpha=a_{-1,0}$ and $\gamma=a_{0,1}/a_{-1,0}$, and this ends the proof.
\end{proof}

\subsection{The ideal of Fay relations}
To study the Fay identity in more detail, it will be convenient to replace the coefficients $a_{m,n} \in \bC$ by symbols $A_{m,n}$ and accordingly to study a formal version of the Fay identity. More precisely, consider the polynomial $\bC$-algebra
\begin{equation} \label{eqn:defA}
\cA:=\bC[\{A_{-1,0},A_{0,-1}\} \cup \{A_{m,n} \, \vert \, m,n \geq 0, \, m+n \mbox{ odd}\}]
\end{equation}
(the restriction on the indices $(m,n)$ is justified by Proposition \ref{prop:Faybasic}) and let 
\begin{equation}
\Phi(u,v)=\sum A_{m,n}u^mv^n \in (uv)^{-1}\cA[[u,v]]
\end{equation}
be the generic element where the sum is over all $(m,n)$ as in \eqref{eqn:defA}. By definition it satisfies $\Phi(-u,-v)=-\Phi(u,v)$. Also, define
\begin{equation} \label{eqn:Faygeneric}
\begin{aligned}
\cF(u_1,u_2,v_1,v_2):=\Phi(u_1,v_1)\Phi(u_2,v_2)&+\Phi(-u_2,v_1-v_2)\Phi(u_1+u_2,v_1)\\
&+\Phi(-u_1-u_2,-v_2)\Phi(u_1,v_1-v_2).
\end{aligned}
\end{equation}
A priori, it is contained in $(u_1u_2v_1v_2(u_1+u_2)(v_1-v_2))^{-1}\cA[[u_1,u_2,v_1,v_2]]$.
\begin{lem}
The element $\cF(u_1,u_2,v_1,v_2)$ is contained in $\cA[[u_1,u_2,v_1,v_2]]$, i.e.
\begin{equation}
\cF(u_1,u_2,v_1,v_2)=\sum_{m_1,m_2,n_1,n_2 \geq 0}c_{m_1,m_2,n_1,n_2}u_1^{m_1}u_2^{m_2}v_1^{n_1}v_2^{n_2},
\end{equation}
for some $c_{m_1,m_2,n_1,n_2} \in \cA$.
\end{lem}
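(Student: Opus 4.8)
The plan is to read off the possible poles of $\mathcal{F}$ from those of $\Phi$ and then to show that all of them cancel by a residue computation. By the very definition of $\mathcal{A}$ in \eqref{eqn:defA}, the only terms of $\Phi(u,v)$ carrying a negative exponent are $A_{-1,0}u^{-1}$ and $A_{0,-1}v^{-1}$; hence $\Phi$ has at most a simple pole along $u=0$ and along $v=0$, with residues the \emph{constants} $A_{-1,0}$ and $A_{0,-1}$ respectively. Tracking the three summands of \eqref{eqn:Faygeneric}, the poles of $\mathcal{F}$ can therefore only occur along the six hyperplanes $u_1=0$, $u_2=0$, $v_1=0$, $v_2=0$, $u_1+u_2=0$ and $v_1-v_2=0$. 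Moreover, in each summand the two factors $\Phi(\cdot,\cdot)$ are never singular along the same hyperplane (for example the factors of the second summand have poles along $\{u_2=0,\,v_1-v_2=0\}$ and $\{u_1+u_2=0,\,v_1=0\}$ respectively), so each summand, and hence $\mathcal{F}$ itself, has at most a \emph{simple} pole along each of these six forms. This recovers the a priori containment stated just before the lemma.

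First I would reduce the lemma to the vanishing of six residues. Since $\mathcal{A}$ is a domain, so is $\mathcal{A}[[u_1,u_2,v_1,v_2]]$, and each of the six linear forms $\ell$ above is a prime element: dividing out by $\ell$ uses its unit coefficient to eliminate one variable and leaves a power series ring over $\mathcal{A}$, which is a domain. No two of these forms are associate. Writing $\mathcal{F}=P/D$ with $D$ the product of the six forms and $P\in\mathcal{A}[[u_1,u_2,v_1,v_2]]$, it then suffices to prove $\ell\mid P$ for each $\ell$ separately; the standard induction in a domain (each $\ell$ prime, pairwise non-associate, each dividing $P$) yields $D\mid P$ and hence $\mathcal{F}\in\mathcal{A}[[u_1,u_2,v_1,v_2]]$. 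Fixing one form $\ell$ and expanding $\mathcal{F}$ as a Laurent series in a coordinate transverse to $\ell=0$, the simplicity of the pole means that $\mathcal{F}$ has no pole along $\ell=0$ exactly when its residue there vanishes, and in that case $\mathcal{F}$ lies in the localisation at the other five forms, so that $\ell\mid P$.

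The heart of the matter is then the six residue computations, which I expect to be short and to rely only on antisymmetry $\Phi(-u,-v)=-\Phi(u,v)$ together with the constancy of the residues of $\Phi$. For instance, along $u_1=0$ only the first and third summands are singular, each contributing $A_{-1,0}$ from its relevant factor, so that $\operatorname{Res}_{u_1=0}\mathcal{F}=A_{-1,0}\bigl(\Phi(u_2,v_2)+\Phi(-u_2,-v_2)\bigr)=0$; the residues along $v_1=0$ and along $v_1-v_2=0$ vanish by the same antisymmetry mechanism, being equal to $A_{0,-1}\bigl(\Phi(u_2,v_2)+\Phi(-u_2,-v_2)\bigr)$ and $A_{0,-1}\bigl(\Phi(u_1+u_2,v_2)+\Phi(-u_1-u_2,-v_2)\bigr)$ respectively. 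Along $u_2=0$, $u_1+u_2=0$ and $v_2=0$ the two contributing residues instead cancel directly, the relative sign arising because $\operatorname{Res}_{u_2=0}\Phi(-u_2,\cdot)=-A_{-1,0}$, and similarly for the arguments $-u_1-u_2$ and $-v_2$. I expect no genuine obstacle here beyond careful bookkeeping: tracking which summand is singular along which hyperplane and handling the signs produced by the negated arguments.
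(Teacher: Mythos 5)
Your proof is correct and takes essentially the same route as the paper's: reduce to checking that the residues of $\cF$ along the six hyperplanes $u_1=0$, $u_2=0$, $u_1+u_2=0$, $v_1=0$, $v_2=0$, $v_1-v_2=0$ vanish, and verify each using the antisymmetry $\Phi(-u,-v)=-\Phi(u,v)$. The residue computations you sketch (and the justification of the reduction, which the paper leaves implicit) are all accurate.
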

\begin{prf}
By definition, $\Phi(u,v)$ has simple poles exactly along $u=0$ and $v=0$ with residues $A_{-1,0}$ and $A_{0,-1}$ respectively. It is therefore sufficient to check that all residues of $\cF$ along $u_1=0$, $u_2=0$, $u_1+u_2=0$, $v_1=0$, $v_2=0$ and $v_1-v_2=0$ vanish, which is straightforward and essentially only uses that $\Phi(-u,-v)=-\Phi(u,v)$.
\end{prf}

\begin{dfn}
Define $\cJ \subset \cA$ to be the ideal generated by the coefficients $c_{m_1,m_2,n_1,n_2}$ of $\cF$. Also, define $\bar\cA:=\cA/\cJ$ to be the corresponding quotient.
\end{dfn}
The ideal $\cJ$ could be called the ideal of Fay relations. By definition, giving a formal Laurent series $f(u,v)=\sum_{m,n>\!\!>-\infty}a_{m,n}u^mv^n \in \bC((u,v))$ which satisfies the Fay identity is equivalent to giving a homomorphism of $\bC$-algebras 
\begin{equation}
\varphi_f: \bar\cA \rightarrow \bC, \quad \bar A_{m,n} \mapsto a_{m,n}.
\end{equation}
Solutions to the Fay identity with $a_{-1,0}a_{0,-1} \neq 0$ correspond under this identification to homomorphisms 
$
\varphi_f: \bar \cA_0 \rightarrow \bC,
$
where $\bar \cA_0:=\bar \cA \otimes_{\bC} \bC[\bar A^{-1}_{-1,0},\bar A^{-1}_{0,-1}]$. By Proposition \ref{prop:Faybasic2} it is enough to classify the latter, and we shall therefore be interested in understanding the structure of $\bar\cA_0$. To this end, let 
$
\bar\cA' \subset \bar\cA$,
be the $\bC$-subalgebra generated by the set $\{\bar A_{0,-1}\} \cup \{\bar A_{m,0} \, \vert \, m \in \{(-1,1,3,5)\}\}$ and denote by
$
\iota: \bar\cA' \hookrightarrow \bar\cA$
the canonical inclusion. Extending scalars to $\bC[\bar A^{-1}_{-1,0},\bar A^{-1}_{0,-1}]$, we get an induced map
\begin{equation}
\iota_0: \bar\cA'_0\rightarrow \bar\cA_0, \quad \mbox{where } \bar\cA'_0:=\bar \cA' \otimes_{\bC} \bC[\bar A^{-1}_{-1,0},\bar A^{-1}_{0,-1}],
\end{equation}
which is clearly injective.
\begin{thm} \label{thm:generators}
The map $\iota_0$ is an isomorphism of algebras.
\end{thm}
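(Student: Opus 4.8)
The plan is to prove surjectivity of $\iota_0$, since injectivity is given. It is convenient to note first that $\cJ$ is a homogeneous ideal for the grading $\mathrm{wt}(A_{m,n}):=m+n+1$: giving $u,v$ weight $-1$ makes $\Phi$ homogeneous of weight $1$, and since the linear substitutions occurring in $\cF$ preserve this weight, $\cF$ is homogeneous of weight $2$, so its coefficients are homogeneous in $\cA$. Thus $\bar\cA_0$ is graded with $\bar A_{-1,0},\bar A_{0,-1}$ in weight $0$, and it suffices to show that every $\bar A_{m,n}$ lies in the image $\bar\cA'_0$. I would work throughout with the reduction $\bar\Phi:=\Phi\bmod\cJ$, which satisfies the Fay identity by construction, and write $\bar\Phi(u,v)=\sum_{n\geq-1}\phi_n(u)v^n$ with $\phi_{-1}=\bar A_{0,-1}$ and $\psi:=\phi_0$. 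The argument then splits into a \emph{vertical} step, reducing everything to the coefficients $\bar A_{m,0}$ of $\psi$, and a \emph{diagonal} step treating $\psi$ itself.

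For the vertical step I would extract the coefficient of $v_2^0$ from the Fay identity. Using $\bar\Phi(-u_1-u_2,-v_2)=-\bar A_{0,-1}/v_2+\psi(-u_1-u_2)+O(v_2)$ together with $\bar\Phi(u_1,v_1-v_2)=\bar\Phi(u_1,v_1)-v_2\,\partial_v\bar\Phi(u_1,v_1)+O(v_2^2)$, the pole--times--regular cross term produces
\begin{equation}
\bar\Phi(u_1,v_1)\psi(u_2)+\bar\Phi(-u_2,v_1)\bar\Phi(u_1+u_2,v_1)+\bar A_{0,-1}\,\partial_v\bar\Phi(u_1,v_1)-\psi(u_1+u_2)\bar\Phi(u_1,v_1)=0.
\end{equation}
Reading off the coefficient of $v_1^{n-1}$ for $n\geq1$ and collecting all terms containing $\phi_n$ gives a functional equation of the shape
\begin{equation}
n\,\bar A_{0,-1}\,\phi_n(u_1)+(-1)^{n+1}\,\bar A_{0,-1}\,\phi_n(u_2)+\bar A_{0,-1}\,\phi_n(u_1+u_2)=-R_n(u_1,u_2),
\end{equation}
where $R_n$ is a polynomial in $\phi_{-1},\ldots,\phi_{n-1}$. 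Since $\phi_n$ is regular at $0$ for $n\geq1$, specializing $u_2=0$ (and, for $\phi_n(0)$, also $u_1=u_2=0$) solves for $\phi_n$ uniquely: the relevant scalar factors are nonzero integers in characteristic $0$, and $\bar A_{0,-1}$ is invertible in $\bar\cA_0$. By induction on $n$ this expresses every $\phi_n$, hence every $\bar A_{m,n}$, as a polynomial in $\bar A_{0,-1}^{\pm1}$ and the coefficients $\bar A_{m,0}$ of $\psi$.

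It remains to handle $\psi$, which is the heart of the matter. The $n=1$ instance of the relation above reads
\begin{equation}
\psi(u_1)\psi(u_2)-\psi(u_1+u_2)\big(\psi(u_1)+\psi(u_2)\big)+\bar A_{0,-1}\big(\phi_1(u_1)+\phi_1(u_2)+\phi_1(u_1+u_2)\big)=0.
\end{equation}
Applying $\partial_{u_1}\partial_{u_2}$ annihilates $\phi_1(u_1)$ and $\phi_1(u_2)$ and turns $\phi_1(u_1+u_2)$ into a function of $u_1+u_2$ alone, which is then eliminated by imposing $(\partial_{u_1}-\partial_{u_2})$; the result is a closed functional equation for $\psi$ with no reference to $\phi_1$. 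Since $\psi=\bar A_{-1,0}u^{-1}+\bar A_{1,0}u+\bar A_{3,0}u^3+\cdots$, this equation is the algebraic avatar of the Weierstrass addition law: normalizing the residue $\bar A_{-1,0}$ and setting $\wp:=-\psi'+\bar A_{1,0}$, one recovers a relation equivalent to $(\wp')^2=$ cubic in $\wp$, whose classical Laurent-coefficient recursion leaves $\bar A_{1,0},\bar A_{3,0},\bar A_{5,0}$ free (the $\wp'$-shift and $g_2,g_3$) and expresses each $\bar A_{2k-1,0}$ with $k\geq4$ as a polynomial in them over $\bC[\bar A_{-1,0}^{\pm1}]$. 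Combined with the vertical step, every $\bar A_{m,n}$ then lies in $\bar\cA'_0$, giving surjectivity.

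The hard part will be exactly this diagonal step: one must verify that the $\psi$-only equation extracted from Fay is genuinely equivalent to the Weierstrass differential equation, and that the ensuing coefficient recursion both has unit leading coefficients (so that it is solvable in $\bar\cA_0$) and closes after precisely the three generators $\bar A_{1,0},\bar A_{3,0},\bar A_{5,0}$. By contrast the vertical step is essentially formal once the coefficient of $v_2^0$ has been isolated.
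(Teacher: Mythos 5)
Your strategy is sound and genuinely different from the paper's. The paper never forms generating functions in one variable: it computes a handful of coefficients $c_{m_1,m_2,n_1,n_2}$ of $\cF$ explicitly modulo $\fp^2$ (Proposition \ref{prop:coefficients}) and then runs an induction on the degree $d=m+n$, using the pair of relations \eqref{eqn:3} and \eqref{eqn:5} --- whose coefficient vectors are linearly independent for $d\geq 7$ --- to conclude $\bar A_{d,0}\in(\bar\fp^2_0)_{d-1}$, and the single relation \eqref{eqn:6} to walk along each antidiagonal. Your ``vertical step'' plays the role of that antidiagonal walk and your ``diagonal step'' plays the role of the linear-independence trick; what your route buys is a structural explanation (the Weierstrass curve) of why exactly the three coefficients $\bar A_{1,0},\bar A_{3,0},\bar A_{5,0}$ remain free, at the cost of a longer derivation; what the paper's route buys is that everything stays at the level of finitely many explicit quadratic congruences. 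I checked your vertical step and it is correct as stated (one must observe that $R_n(u_1,u_2)$ is regular at $u_2=0$ after the two $u_2^{-1}$-poles coming from $\phi_{n-1}(u_1)\psi(u_2)$ and $\phi_0(-u_2)\phi_{n-1}(u_1+u_2)$ cancel, but this is automatic since the left-hand side is regular).

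The step you flag as unverified does go through, but be careful with the differentiation: applying $(\partial_{u_1}-\partial_{u_2})\partial_{u_1}\partial_{u_2}$ to $\psi(u_1+u_2)\bigl(\psi(u_1)+\psi(u_2)\bigr)$ produces \emph{two} surviving terms, $\psi''(s)\bigl(\psi'(u_1)-\psi'(u_2)\bigr)+\psi'(s)\bigl(\psi''(u_1)-\psi''(u_2)\bigr)$ with $s=u_1+u_2$, not just the first. With $p:=-\psi'$ the resulting closed equation is
\begin{equation}
p'(s)\bigl(p(u_1)-p(u_2)\bigr)+p(s)\bigl(p'(u_1)-p'(u_2)\bigr)=p'(u_1)p(u_2)-p(u_1)p'(u_2),
\end{equation}
which is exactly the vanishing of the determinant $\det\bigl(1,p(u_i),p'(u_i)\bigr)_{i=1,2,3}$ for $u_1+u_2+u_3=0$, i.e.\ the collinearity form of the addition law. (If you drop the second term, the equation is false already for $p(u)=u^{-2}$.) To close the recursion you do not even need the full equivalence with $(\wp')^2=4\wp^3-g_2\wp-g_3$: extracting the coefficient of $u_2^0$ from this identity, with $p(u)=c_{-1}u^{-2}+c_0+c_1u^2+c_2u^4+\cdots$ and $c_k=-(2k+1)\bar A_{2k+1,0}$, yields $\tfrac{c_{-1}}{6}\,p''=(p-c_0)^2+C$ for a constant $C$ determined by $c_{-1},c_0,c_1$, and comparing coefficients of $u^{2k-2}$ gives
\begin{equation}
\frac{(2k+3)(k-2)}{3}\,c_{-1}c_k=\sum_{\substack{i+j=k-1\\ i,j\geq 1}}c_ic_j \qquad (k\geq 2).
\end{equation}
The leading factor vanishes only for $k=2$, and $c_{-1}=\bar A_{-1,0}$ is a unit in $\bar\cA_0$, so $c_k$ for $k\geq 3$ is determined by $c_1,\ldots,c_{k-2}$ over $\bC[\bar A_{-1,0}^{\pm 1}]$, leaving precisely $c_{-1},c_0,c_1,c_2$, i.e.\ $\bar A_{-1,0},\bar A_{1,0},\bar A_{3,0},\bar A_{5,0}$, free. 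With that computation supplied, your argument is a complete and valid alternative proof of surjectivity, hence of the theorem.
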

More concretely, using the $1$-$1$ correspondence $f \leftrightarrow \varphi_f$ described above, Theorem \ref{thm:generators} says that every solution $f(u,v) \in \bC((u,v))$ to the Fay identity with $a_{-1,0}a_{0,-1} \neq 0$ is uniquely determined by its coefficients $a_{0,-1}$ and $a_{m,0}$, for $m=-1,1,3,5$.
\begin{rmk}
Although we will not need this, one can show that $\bar \cA_0$ is freely generated, as a $\bC[\bar A^{-1}_{-1,0},\bar A^{-1}_{0,-1}]$-algebra, by $\bar A_{0,-1}$ and $\bar A_{m,0}$ for $m=-1,1,3,5$; this follows from Theorem \ref{thm:generators} since for every quintuple $(z_{-1},w_{-1},z_1,z_3,z_5) \in \bC^5$ such that $z_{-1}w_{-1} \neq 0$ there exists a solution $f(u,v)=\sum_{m,n >\!\!> -\infty}a_{m,n}u^mv^n$ to the Fay identity such that $a_{0,-1}=w_{-1}$ and $a_{m,0}=z_m$ for $m=-1,1,3,5$ (see the proof of Theorem \ref{thm:main}).
\end{rmk}

Before we prove Theorem \ref{thm:generators}, we need to introduce some more notation. Let $\fp \subset \cA$ be the ideal generated by all $A_{m,n}$ with $m,n\geq 0$. This is a homogeneous prime ideal of $\cA$, the grading being defined by giving $A_{m,n}$ degree $m+n$. Moreover, since the ideal $\cJ$ of Fay relations is homogeneous, this grading descends to the quotient $\bar \cA$. In general, given a homogeneous ideal $\cI$ of either $\cA$ or $\bar \cA$, we will denote by $\cI_k$ its component of degree $k$.

The following proposition gives explicit formulas for some of the coefficients $c_{m_1,m_2,n_1,n_2}$ and will be the key for proving Theorem \ref{thm:generators}.
\begin{prop} \label{prop:coefficients} 
We have the following formulas for the coefficients of $\cF$:
\begin{equation}
c_{0,0,0,0}=3A_{0,-1}A_{0,1}-3A_{-1,0}A_{1,0} \label{eqn:1}
\end{equation}
and for $k\geq 2$ even,
\begin{align}
c_{0,0,0,k}& \equiv (k+3)A_{0,-1}A_{0,k+1}-2A_{-1,0}A_{1,k} \mod \fp^2_k, \label{eqn:2}\\
c_{0,k,0,0}&\equiv 2A_{0,-1}A_{k,1}-(k+3)A_{-1,0}A_{k+1,0} \mod \fp^2_k, \label{eqn:3}\\
c_{0,0,2,k-2}&\equiv \left( \binom{k+2}{3}+1 \right)A_{0,-1}A_{0,k+1}-\left(\binom{k}{2}+\delta_{k,2}\right)A_{-1,0}A_{1,k} \mod \fp^2_k, \label{eqn:4}\\
c_{k-2,2,0,0}&\equiv -\left( \binom{k+2}{3}+1 \right)A_{-1,0}A_{k+1,0}+\left(\binom{k}{2}+\delta_{k,2}\right)A_{0,-1}A_{k,1} \mod \fp^2_k. \label{eqn:5}
\end{align}
Finally, for $0<m<k$ with $k$ as above, we have
\begin{equation}
c_{0,m,0,k-m}\equiv(k+2-m)A_{0,-1}A_{m,k+1-m}-(m+2)A_{-1,0}A_{m+1,k-m} \mod \fp^2_k. \label{eqn:6}
\end{equation}
\end{prop}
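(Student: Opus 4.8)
The plan is to read off the stated coefficients directly from the definition \eqref{eqn:Faygeneric} of $\cF$, after reducing modulo $\fp^2$. Write $\Phi=P+\Phi_{\mathrm{reg}}$, where $P(u,v)=A_{-1,0}u^{-1}+A_{0,-1}v^{-1}$ is the polar part and $\Phi_{\mathrm{reg}}(u,v)=\sum_{m,n\geq 0}A_{m,n}u^mv^n$. Since $\cF$ is bilinear in $\Phi$, every coefficient $c_{m_1,m_2,n_1,n_2}$ is a sum of products of exactly two symbols $A_{a,b}$, and such a product lies in $\fp^2$ precisely when both factors have nonnegative indices. Thus modulo $\fp^2$ the contribution of $\Phi_{\mathrm{reg}}\cdot\Phi_{\mathrm{reg}}$ may be discarded, while the purely polar contribution obtained by replacing every $\Phi$ by $P$ vanishes identically, being exactly the Fay combination of $P$, which is zero by Corollary \ref{cor:solution} with $\gamma=0$. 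Hence modulo $\fp^2$ we have $\cF\equiv A_{-1,0}\,L^{(u)}+A_{0,-1}\,L^{(v)}$, where $L^{(u)}$ and $L^{(v)}$ collect the six cross terms in which one factor of $P$ is replaced by $A_{-1,0}/(\cdot)$, respectively $A_{0,-1}/(\cdot)$, and the other $\Phi$ by $\Phi_{\mathrm{reg}}$.

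Next I would record $L^{(u)}$ and $L^{(v)}$ in grouped form, e.g.
\[ L^{(u)}=\frac{\Phi_{\mathrm{reg}}(u_2,v_2)+\Phi_{\mathrm{reg}}(-u_1-u_2,-v_2)}{u_1}+\frac{\Phi_{\mathrm{reg}}(u_1,v_1)-\Phi_{\mathrm{reg}}(u_1+u_2,v_1)}{u_2}+\frac{\Phi_{\mathrm{reg}}(-u_2,v_1-v_2)-\Phi_{\mathrm{reg}}(u_1,v_1-v_2)}{u_1+u_2}, \]
and the analogue for $L^{(v)}$ with denominators $v_1$, $v_2$, $v_1-v_2$. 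In each fraction the numerator vanishes on the corresponding polar locus (by antisymmetry $\Phi_{\mathrm{reg}}(-u,-v)=-\Phi_{\mathrm{reg}}(u,v)$), so that $L^{(u)}$ and $L^{(v)}$ are genuine power series; this is the coefficientwise shadow of the preceding Lemma. To perform the divisions explicitly I would expand $(u_1+u_2)^a$ and $(v_1-v_2)^b$ binomially and use telescoping identities such as
\[ \frac{(-u_2)^a-u_1^a}{u_1+u_2}=-\sum_{j=0}^{a-1}u_1^{a-1-j}(-u_2)^j, \]
which turn every fraction into a polynomial whose monomials can be read off. Throughout I would use that $k$ is even and that $A_{m,n}=0$ unless $m+n$ is odd, so that signs and divisibility (e.g.\ $v_1^b-v_2^b$ for $b$ odd) behave uniformly.

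With these preparations, each formula becomes a finite coefficient extraction. For \eqref{eqn:1} one picks out the constant terms of $L^{(u)},L^{(v)}$; here $\fp^2$ has no part in degree $0$, so the identity is exact. For the generic formula \eqref{eqn:6} (coefficient of $u_2^mv_2^{k-m}$) only the fractions over $u_1$ and over $u_1+u_2$ survive in $L^{(u)}$ --- the $u_2$-fraction carries no $v_2$ --- contributing $-(m+1)$ and $-1$ to the coefficient of $A_{m+1,k-m}$, for a total of $-(m+2)$; the $v$-analogue gives $(k+2-m)A_{m,k+1-m}$. The coefficients \eqref{eqn:2} and \eqref{eqn:3} are the endpoint cases, extracted from the monomials $v_2^k$ and $u_2^k$, and are handled identically.

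The main obstacle is \eqref{eqn:4} and \eqref{eqn:5}, where the presence of a $v_1^2$ (respectively $u_1^{k-2}$) factor forces contributions from all three fractions and introduces genuine binomial sums. For the $A_{0,-1}A_{0,k+1}$ coefficient of $c_{0,0,2,k-2}$, for instance, the three fractions of $L^{(v)}$ contribute $\binom{k+1}{3}$, $\binom{k+1}{2}$ and $1$, and Pascal's identity $\binom{k+1}{3}+\binom{k+1}{2}=\binom{k+2}{3}$ produces the stated $\binom{k+2}{3}+1$. The Kronecker delta $\delta_{k,2}$ arises as a boundary correction: in computing the $A_{-1,0}A_{1,k}$ coefficient, the $u_2$-fraction of $L^{(u)}$ (which carries no $v_2$) contributes only when $k-2=0$, adding $1$ to the $\binom{k}{2}$ coming from the $u_1+u_2$-fraction. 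I expect the careful tracking of these binomial sums and of such degenerate ranges of summation --- rather than any conceptual difficulty --- to be where essentially all the work lies.
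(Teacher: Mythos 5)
Your proposal is correct and follows essentially the same route as the paper: both reduce modulo $\fp^2$ to the cross terms between the polar part and the regular part of $\Phi$ (the purely polar contribution vanishing because $A_{-1,0}u^{-1}+A_{0,-1}v^{-1}$ satisfies the Fay identity, the purely regular one lying in $\fp^2$) and then extract the stated coefficients by direct expansion --- the paper packages this reduction as a truncation to the four-term test series $\varphi_{r,s}$ via homogeneity, whereas you keep the full regular part and organize the six cross terms into the series $L^{(u)}$, $L^{(v)}$, but the computation is the same. The individual contributions you record, e.g. $-(m+1)$ and $-1$ summing to $-(m+2)$ in \eqref{eqn:6}, and $\binom{k+1}{3}+\binom{k+1}{2}+1=\binom{k+2}{3}+1$ together with the boundary term $\delta_{k,2}$ in \eqref{eqn:4}, all check out.
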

\begin{proof}
Since the Fay identity is homogeneous, for every monomial $u_2^rv_2^s$ with $r,s\geq 0$ the coefficient of $u_2^rv_2^s$ in \eqref{eqn:Faygeneric} is congruent modulo $\fp^2_{r+s}$ to the coefficient of $u_2^rv_2^s$ in
\begin{align} \label{eqn:Fayspecial}
\varphi_{r,s}(u_1,v_1)\varphi_{r,s}(u_2,v_2)&+\varphi_{r,s}(-u_2,v_1-v_2)\varphi_{r,s}(u_1+u_2,v_1)\\
&+\varphi_{r,s}(-u_1-u_2,-v_2)\varphi_{r,s}(u_1,v_1-v_2),
\end{align}
where
$
\varphi_{r,s}(u,v)=A_{-1,0}u^{-1}+A_{0,-1}v^{-1}+A_{r,s+1}u^rv^{s+1}+A_{r+1,s}u^{r+1}v^{s}.
$
A straightforward computation of the coefficient of $u_2^mv_2^{k-m}$ in \eqref{eqn:Fayspecial} now yields \eqref{eqn:1}, \eqref{eqn:2}, \eqref{eqn:3} and $\eqref{eqn:6}$.

Similarly, for $k\geq 2$ the coefficient of $v_1^2v_2^{k-2}$ (respectively of $u_1^{k-2}u_2^2$) in \eqref{eqn:Faygeneric} is congruent modulo $\fp^2_k$ to the coefficient of the same monomial in \eqref{eqn:Fayspecial} for $(r,s)=(0,k)$ (respectively for $(r,s)=(k,0)$), and we get \eqref{eqn:4} and \eqref{eqn:5}.
\end{proof}
\begin{proof}[Proof of Theorem \ref{thm:generators}]
It is clearly enough to show that $\bar A_{m,n} \in \bar \cA'_0$ for all $m,n$. We prove this by induction on the degree $d=m+n$ of $\bar A_{m,n}$. For $d=1$, we have $\bar A_{1,0} \in \bar \cA'$ by definition and it follows from \eqref{eqn:1} that $\bar A_{0,1}=\bar A_{-1,0}^{-1}\bar A_{0,-1}\bar A_{1,0} \in \bar \cA'_0$. Note that this also implies that $(\bar\fp^2_0)_2 \subset \bar\cA'_0$ where $\bar\fp^2_0$ denotes the ideal generated by the image of $\fp^2$ in $\bar\cA_0$.

Now we use induction on $d$ to show that $\bar A_{m,n} \in \bar\cA'_0$ for all $m,n$ with $d=m+n$ (this shows in particular that $(\bar\fp^2_0)_{d-1} \subset \bar\cA'_0$). For $d=3$ or $d=5$, since $\bar A_{d,0} \in \bar\cA'$ in that case, we see from \eqref{eqn:3} together with $(\bar\fp^2_0)_{d-1} \subset \bar\cA'_0$ (which follows from the induction hypothesis) that $\bar A_{d-1,1} \in \bar\cA'_0$. Repeating the same argument, using \eqref{eqn:6} and finally \eqref{eqn:2}, we obtain $\bar A_{m,n} \in \bar\cA'_0$ for all $m+n=d$, if $d=3$ or $d=5$.

If $d\geq 7$, the crucial point is that the row vectors $(-(k+3),2)$ and $(-\binom{k+2}{3}-1,\binom{k}{2}+\delta_{k,2})$, where $k=d-1$, are linearly independent. Therefore, from \eqref{eqn:3} and \eqref{eqn:5}, we see that $\bar A_{d,0} \equiv 0 \mod (\bar\fp^2_0)_{d-1}$, i.e. $\bar A_{d,0} \in (\bar\fp^2_0)_{d-1}$. By our induction hypothesis, this shows $\bar A_{d,0} \in \bar\cA'_0$ and using a similar argument as before we get $\bar A_{m,n} \in \bar\cA'_0$ for all $m+n=d$.
\end{proof}

\subsection{Proof of Theorem {\ref{thm:main}}} \label{ssec:3.3}
Let $f(u,v)=\sum_{m,n>\!\!>-\infty}a_{m,n}u^mv^n$ be a solution to the Fay identity. By Proposition \ref{prop:Faybasic}, we have
\begin{equation}
f(u,v)=\frac{a_{-1,0}}{u}+\frac{a_{0,-1}}{v}+\sum_{m,n\geq 0}a_{m,n}u^mv^n, \quad \mbox{with }a_{m,n}=0, \, \mbox{if $m+n \in 2\bZ$.}
\end{equation}
The case $a_{-1,0}a_{0,-1}=0$ has already been taken care of by Propositions \ref{prop:Faybasic} and \ref{prop:Faybasic2} so that we may assume $a_{-1,0}a_{0,-1} \neq 0$, i.e. $a_{-1,0}$ and $a_{0,-1}$ are both invertible. We distinguish between two cases.

\noindent
\textit{Case (i): $(a_{3,0},a_{5,0})=(0,0)$.}
Let
\begin{equation}
\frac{b_{-1,0}}{u}+\frac{b_{0,-1}}{v}+\sum_{m,n\geq 0}b_{m,n}u^mv^n,
\end{equation}
be the Laurent expansion of $e^{\gamma uv}( \alpha/u+\beta/v)$.\footnote{Here and in the following we suppress the dependence of the Laurent coefficients $b_{m,n}$ on the parameters $\alpha,\beta,\gamma,\delta,\tau$.} It is straightforward to verify that setting $\alpha=a_{-1,0}$, $\beta=a_{0,-1}$ and $\gamma=a_{1,0}/\beta$, we have $a_{0,-1}=b_{0,-1}$ and $a_{m,0}=b_{m,0}$ for $m=-1,1,3,5$. Therefore $a_{m,n}=b_{m,n}$ for all $m,n$ by Theorem \ref{thm:generators}. The uniqueness of $\alpha,\beta$ and $\gamma$ is clear.
\bigskip

\noindent
\textit{Case (ii): $(a_{3,0},a_{5,0}) \neq (0,0)$.}
For any choice of parameters $\alpha,\beta,\gamma,\delta,\tau$ as in the statement of Theorem \ref{thm:main}.(ii), we get the following Laurent expansion
\begin{equation}
\delta e^{\gamma uv}F_{\tau}\left(\frac{u}{\alpha},\frac{v}{\beta}\right)=\frac{b_{-1,0}}{u}+\frac{b_{0,-1}}{v}+\sum_{m,n\geq 0}b_{m,n}u^mv^n.
\end{equation}
It follows from \cite[Theorem 3.(iv)]{Zagier:Periods} that $
b_{-1,0}=\alpha\delta$, $b_{0,-1}=\beta\delta,
$ and
\begin{align}
b_{m,0}=\begin{cases}\displaystyle-2\delta\frac{\alpha^{-m}}{m!}G_{m+1}(\tau)=-2b_{-1,0}\frac{\alpha^{-(m+1)}}{m!}G_{m+1}(\tau), & m \neq 1,\\\\
\displaystyle\beta\gamma\delta-2\delta\alpha^{-1}G_2(\tau)=b_{0,-1}\gamma-2b_{-1,0}\alpha^{-2}G_2(\tau), & m=1,
\end{cases}
\end{align}
where $G_k(\tau)$ is the Hecke-normalized Eisenstein series of weight $k$ (the normalization is in fact irrelevant here, we will only need that $G_k(\tau)$ is a modular form of weight $k$).

We now view $b_{3,0}$ and $b_{5,0}$ as functions of $(\alpha,\tau)$ and claim that the map
\begin{equation}
\bC^{\times} \times \overline{\fH} \rightarrow \bC^2 \setminus \{(0,0)\}, \quad (\alpha,\tau) \mapsto (b_{3,0},b_{5,0}),
\end{equation}
where $\overline{\fH}:=\fH \cup \{i\infty\}$, is surjective. Indeed, it is enough to prove that the map $(\alpha,\tau) \mapsto (b_{3,0}^3,b_{5,0}^2)$ is surjective which in turn is equivalent to proving surjectivity of
\begin{equation}
\overline{\fH} \rightarrow \bP^1(\bC), \quad \tau \mapsto [b_{3,0}^3:b_{5,0}^2].
\end{equation}
But this map is surjective because the quotient $b_{3,0}^3/b_{5,0}^2$ is a non-constant modular function (being proportional to $G_4(\tau)^3/G_6(\tau)^2$), and every such function surjects onto $\bP^1(\bC)$.

By what we have just proved, given any coefficients $(a_{3,0},a_{5,0}) \in \bC^2 \setminus \{(0,0)\}$, we can choose $\alpha \in \bC^{\times}$ and $\tau \in \overline{\fH}$ such that we have an equality of Laurent coefficients $b_{3,0}=a_{3,0}$ and $b_{5,0}=a_{5,0}$. Now setting
\begin{equation}
\delta=\frac{a_{-1,0}}{\alpha}, \quad \beta=\frac{a_{0,-1}}{\delta}, \quad \gamma=\frac{a_{1,0}+2a_{-1,0}\alpha^{-2}G_2(\tau)}{a_{0,-1}},
\end{equation}
we get $a_{0,-1}=b_{0,-1}$ as well as $a_{m,0}=b_{m,0}$ for $m=-1,1,3,5$, hence $a_{m,n}=b_{m,n}$ for all $m,n$ by Theorem \ref{thm:generators}.

On the other hand, assume that $\alpha',\beta',\gamma',\delta',\tau'$ are different parameters such that $f(u,v)=\delta'e^{\gamma'uv}F_{\tau'}\left( \frac{u}{\alpha'},\frac{v}{\beta'} \right)$. Then, since the quotient $b_{3,0}^3/b_{5,0}^2$ (which depends neither on $\alpha$, nor on $\alpha'$) is a non-constant modular function, there exists $\left( \begin{smallmatrix}a&b\\c&d\end{smallmatrix} \right) \in \SL_2(\bZ)$ such that $\tau'=\frac{a\tau+b}{c\tau+d}$. The fact that $\alpha'=\alpha(c\tau+d)$ and $\gamma'=\gamma-\frac{c/2\pi i}{\alpha\beta(c\tau+d)}$ then follows by looking at the coefficients $b_{m,0}$ and using the modular transformation property of $G_{m+1}(\tau)$, if $\tau,\tau' \neq i\infty$. Finally, that $\delta'=\frac{\delta}{c\tau+d}$ follows from the previous result using that $\alpha\delta=b_{-1,0}=\alpha'\delta'$ and similarly one shows $\beta'=\beta(c\tau+d)$. This ends the proof of Theorem \ref{thm:main}.
\qed

\section{Proof of the main result} \label{sec:4}

We now give the proof of Theorem \ref{thm:main2}. First, we need to describe the relation between Fay identity and period relations.

\subsection{Comparison with the period relations} \label{ssec:4.1}

We again follow \cite{Zagier:Periods}. Consider the formal generating series
\begin{equation}
C(X,Y;\tau;T)=\frac{(XY-1)(X+Y)}{X^2Y^2}T^{-2}+\sum_{k=2}^{\infty}c_k(X,Y;\tau)\frac{T^{k-2}}{(k-2)!},
\end{equation}
whose coefficients $c_k(X,Y;\tau)$ are given by
\begin{equation}
c_k(X,Y;\tau)=\sum_f \frac{\frac 12(r_f(X)r_f(Y)-r_f(-X)r_f(-Y))}{(2i)^{k-3}\langle f,f\rangle}f(\tau).
\end{equation}
Here the sum is over all normalized Hecke eigenforms $f$ of weight $k$, $r_f(X)$ denotes the extended period polynomial of $f$ and $\langle\cdot,\cdot\rangle$ is the Petersson inner product. It follows from the definition of $C$ and Equation \eqref{eqn:periodrels} that $C$ satisfies the following variant of the period relations
\begin{equation} \label{eqn:periodrelations}
\begin{aligned}
C(X,Y;\tau;T)+C\left(-\frac 1X,Y;\tau;XT\right)&=0,\\
C(X,Y;\tau;T)+C\left(1-\frac 1X,Y;\tau;XT\right)+C\left( \frac{1}{1-X},Y;\tau;(1-X)T \right)&=0.
\end{aligned}
\end{equation}
Using the ``slash operator''
\begin{equation} \label{eqn:action}
(C \vert \gamma)(X,Y;\tau;T):=C\left( \frac{aX+b}{cX+d},Y;\tau;(cX+d)T \right), \quad \gamma=\begin{pmatrix}a&b\\c&d\end{pmatrix} \in \SL_2(\bZ),
\end{equation}
Equation \eqref{eqn:periodrelations} can be expressed more concisely as
$C\vert(1+S)=C\vert(1+U+U^2)=0$, where $S=\left(\begin{smallmatrix}0&-1\\1&0\end{smallmatrix}\right)$ and $U=\left(\begin{smallmatrix}1&-1\\1&0\end{smallmatrix}\right)$ are generators of $\SL_2(\bZ)$ and $\vert$ is extended linearly to the group ring of $\SL_2(\bZ)$.

Now recall from Theorem \ref{thm:Zagier} that $C(X,Y;\tau;T)=F_{\tau}(T,-XYT)F_{\tau}(XT,YT)$. In our context, this gives a relation between the Fay identity and the period relations, which we describe next. Given a formal Laurent series $f(u,v) \in \bC((u,v))$, we define
\begin{equation}
C_f(X,Y,T)=f(T,-XYT)f(XT,YT) \in \bC((X,Y,T)).
\end{equation}
\begin{prop} \label{prop:periodsFay}
The series $C_f$ satisfies the period relations, Equation \eqref{eqn:periodrelations}, if and only if $f$ satisfies the Fay identity, Equation \eqref{eqn:Fay}.
\end{prop}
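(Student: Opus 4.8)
The plan is to derive both implications from a single change of variables turning the period relations \eqref{eqn:periodrelations} into statements about $f$. I first record the effect of the two generators. For $S$ one computes $C_f(-1/X,Y,XT)=f(XT,YT)f(-T,XYT)$, so the first period relation becomes $f(XT,YT)\bigl(f(T,-XYT)+f(-T,XYT)\bigr)=0$; since $\bC((X,Y,T))$ is an integral domain and $f\mapsto f(XT,YT)$ is injective, for $f\neq 0$ this is exactly the antisymmetry $f(u,v)+f(-u,-v)=0$. Assuming antisymmetry, I then expand the three summands of the $U$-relation and simplify each using $f(-u,-v)=-f(u,v)$; writing $\Phi_f(u_1,v_1,u_2,v_2)$ for the left-hand side of the Fay identity \eqref{eqn:Fay}, a termwise comparison yields
\begin{equation}
C_f(X,Y,T)+C_f\Bigl(1-\tfrac1X,Y,XT\Bigr)+C_f\Bigl(\tfrac1{1-X},Y,(1-X)T\Bigr)=-\Phi_f(T,-XYT,-XT,-YT).
\end{equation}
Thus, under antisymmetry, the second period relation is equivalent to the vanishing of $\Phi_f$ along the substitution $(u_1,v_1,u_2,v_2)=(T,-XYT,-XT,-YT)$.

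The forward implication is now immediate: if $f$ satisfies the Fay identity then $f$ is antisymmetric by Proposition \ref{prop:Faybasic}, so the first relation holds, and $\Phi_f\equiv 0$ forces the second. The backward implication is the main obstacle. The difficulty is that the three parameters $X,Y,T$ cannot parametrize the four-dimensional space of $(u_1,v_1,u_2,v_2)$: an elementary computation shows $u_1v_1+u_2v_2=0$ identically along the substitution, and in fact $(X,Y,T)\mapsto(T,-XYT,-XT,-YT)$ is a bijection onto the hypersurface $\{\ell=0\}$, where $\ell:=u_1v_1+u_2v_2$. Consequently the period relations only tell us that $\Phi_f$ vanishes on $\{\ell=0\}$, which a priori is strictly weaker than the full Fay identity $\Phi_f\equiv 0$.

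To bridge this gap I would first use that, once antisymmetry is known (from the $S$-relation and Proposition \ref{prop:Faybasic}), the Fay defect $\Phi_f$ is holomorphic, i.e. $\Phi_f\in\bC[[u_1,v_1,u_2,v_2]]$; this is the content of the lemma guaranteeing $\cF\in\cA[[u_1,u_2,v_1,v_2]]$. Vanishing on the irreducible hypersurface $\{\ell=0\}$ then gives $\ell\mid\Phi_f$, and it remains to show that an antisymmetric Fay defect divisible by $\ell$ must vanish. The key structural point is that $\Phi_f$ is bihomogeneous: a coefficient $a_{m,n}$ always contributes $u$-degree $m$ and $v$-degree $n$, so $\Phi_f=\sum_{M,N}\Phi_f^{(M,N)}$ with $\ell$ of bidegree $(1,1)$, whence $\ell\mid\Phi_f^{(M,N)}$ for every $(M,N)$. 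When $M=0$ or $N=0$ divisibility by $\ell$ is impossible for a nonzero bihomogeneous polynomial, as the quotient would have negative degree; thus every ``pure'' piece vanishes outright. Unwinding what this says about the coefficients $c_{m_1,m_2,n_1,n_2}$, one recovers precisely the relations \eqref{eqn:1}--\eqref{eqn:5} of Proposition \ref{prop:coefficients}, since each of these coefficients lies in a pure bidegree.

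For the mixed pieces $M,N\geq 1$, where $\ell$-divisibility no longer forces vanishing, I would instead read off the alternating-diagonal-sum form of the condition $\ell\mid\Phi_f^{(M,N)}$, a family of linear relations among the coefficients $c_{m_1,m_2,n_1,n_2}$ of one fixed degree. Feeding in the leading-order formulas of Proposition \ref{prop:coefficients} and the non-degeneracy exploited in the proof of Theorem \ref{thm:generators} — the linear independence of the relevant row vectors — these relations, together with the pure vanishings, determine all top-degree coefficients at each step, reproducing the remaining recurrence \eqref{eqn:6} in the limit. Proceeding by induction on the degree, one concludes that $f$ is uniquely determined by $a_{-1,0},a_{0,-1},a_{1,0},a_{3,0},a_{5,0}$ and coincides with the Fay solution carrying the same five coefficients, which exists by Corollary \ref{cor:solution} (cf.\ Theorem \ref{thm:main}); hence $\Phi_f\equiv 0$ and $f$ satisfies the Fay identity. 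I expect this degree-by-degree linear-algebra step, rather than the substitution itself, to be the delicate heart of the backward implication.
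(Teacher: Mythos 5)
Your analysis of the $S$-relation, the forward implication, and the reduction of the $U$-relation to the statement ``$\Phi_f$ vanishes along $(u_1,v_1,u_2,v_2)=(T,-XYT,-XT,-YT)$'' all agree with the paper, and your observation that this substitution only parametrizes the quadric $\{u_1v_1+u_2v_2=0\}$ is correct: the paper's own proof simply rewrites $C_f\vert(1+U+U^2)=0$ as the four-variable identity \eqref{eqn:Fayidentity2} and does not comment on the algebraic dependence among the four arguments, so you are engaging with a point the paper passes over in silence. The problem is that your repair of the backward implication does not go through as described. The pure-bidegree pieces do recover \eqref{eqn:1}--\eqref{eqn:5}, as you say. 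But the mixed-bidegree conditions coming from $\ell\mid\Phi_f^{(M,N)}$ do \emph{not} reproduce the recurrence \eqref{eqn:6}, which is exactly the relation needed to walk along an antidiagonal $m+n=d$ in the induction of Theorem \ref{thm:generators}. Concretely, in bidegree $(1,1)$ divisibility by $\ell$ amounts to $c_{1,0,0,1}=c_{0,1,1,0}=0$ together with $c_{1,0,1,0}=c_{0,1,0,1}$; a direct computation along the lines of Proposition \ref{prop:coefficients} gives $c_{1,0,0,1}\equiv c_{0,1,1,0}\equiv 0$ and $c_{1,0,1,0}\equiv c_{0,1,0,1}\equiv 3A_{0,-1}A_{1,2}-3A_{-1,0}A_{2,1}$ modulo $\fp^2_2$, so all three conditions are vacuous at the linear level, whereas \eqref{eqn:6} with $k=2$ is precisely the nontrivial relation $3A_{0,-1}A_{1,2}\equiv 3A_{-1,0}A_{2,1}$ linking the two halves of the antidiagonal $m+n=3$. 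At the linear level your conditions therefore leave $a_{1,2}$ free independently of $a_{3,0}$, and the phrase ``in the limit'' conceals the entire difficulty: you would need higher-degree conditions to feed back and constrain lower-degree coefficients, and you neither formulate nor prove such a mechanism.

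Two further points. First, you invoke the Lemma to conclude $\Phi_f\in\bC[[u_1,v_1,u_2,v_2]]$, but that lemma applies to the generic series $\Phi$, whose pole structure (simple poles only along $u=0$ and $v=0$) is justified by Proposition \ref{prop:Faybasic} \emph{under the hypothesis of the full Fay identity}; in the backward direction you only have antisymmetry, so you must first derive the pole structure from the period relations before the divisibility argument is even well posed. Second, and more importantly, your write-up diverges from the paper precisely where it stops proving things: the paper's proof concludes directly from \eqref{eqn:Fayidentity2} read as an identity in four independent variables. You should either supply the argument that no information is lost in restricting \eqref{eqn:Fayidentity2} to the quadric $\{u_1v_1+u_2v_2=0\}$ --- which is what would vindicate the paper's one-line deduction --- or genuinely carry out the degree-by-degree analysis you sketch. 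As it stands, the key step of your backward implication fails at the first nontrivial bidegree, so the proposal does not prove the proposition.
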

\begin{proof}
By Proposition \ref{prop:Faybasic}, every solution to the Fay identity is antisymmetric, so we may as well prove that the period relations are equivalent to Fay identity and antisymmetry. The equivalence of antisymmetry for $f$ and the period relation $C_f\vert(1+S)=0$ is straightforward. On the other hand, setting $u_1:=T$, $u_2:=-XT$, $v_1:=-XYT$ and $v_2:=-YT$, the period relation $C_f\vert(1+U+U^2)=0$ is
\begin{equation} \label{eqn:Fayidentity2}
\begin{aligned}
f(u_1,v_1)f(-u_2,-v_2)&+f(-u_2,v_1-v_2)f(-u_1-u_2,-v_1)\\
&+f(-u_1-u_2,-v_2)f(-u_1,v_2-v_1)=0,
\end{aligned}
\end{equation}
which together with antisymmetry implies \eqref{eqn:Fay}. Conversely, Fay identity and antisymmetry together imply \eqref{eqn:Fayidentity2}.
\end{proof}

\subsection{Proof of Theorem {\ref{thm:main2}}} \label{ssec:4.2}

Combining Proposition \ref{prop:periodsFay} and Theorem \ref{thm:main}, we know that $C_f(X,Y,T)$ satisfies the period relations if and only if
\begin{equation}
f(u,v)=e^{\gamma uv}\left(\frac{\alpha}{u}+\frac{\beta}{v}\right), \quad \mbox{ or } \quad f(u,v)=\widetilde{\delta} e^{\gamma uv}F_{\tau}\left( \frac{u}{\alpha},\frac{v}{\beta} \right),
\end{equation}
for some $\alpha,\beta,\gamma \in \bC$ in the first case and for some $\alpha,\beta,\widetilde{\delta} \in \bC^{\times}$, $\gamma \in \bC$, $\tau \in \fH \cup \{i\infty\}$ in the second. With the notation of Theorem \ref{thm:main2}, in the first case we get $C_f(X,Y,T)=C_{P_{\alpha,\beta}}(X,Y,T)$ and in the second case $C_f(X,Y,T)=\delta C(X,Y';\tau;T')$ with $\delta:=\widetilde{\delta}^2$, as claimed. The uniqueness assertion follows from the corresponding statement in Theorem \ref{thm:main}.
\qed


\begin{thebibliography}{99}

\bibitem{BL:MEP} F.~Brown, A.~Levin, \textit{Multiple elliptic polylogarithms.} arXiv:1110.6917.

\bibitem{CFI} J.B.~Conrey, D.W.~Farmer, \"O.~Imamoglu, 
\textit{The nontrivial zeros of period polynomials of modular forms lie on the unit circle.} Int. Math. Res. Not. IMRN 2013, no. 20, 4758--4771.

\bibitem{Fay:Theta} J.D.~Fay, \textit{Theta functions on Riemann surfaces.}
Lecture Notes in Mathematics, Vol. 352. Springer--Verlag, Berlin--New York, 1973. iv+137 pp. 

\bibitem{GKZ} H.~Gangl, M.~Kaneko, D.~Zagier, \textit{Double zeta values and modular forms.} Automorphic forms and zeta functions, 71--106, World Sci. Publ., Hackensack, NJ, 2006. 

\bibitem{Ono} S.~Jin, W.~Ma, K.~Ono, K.~Soundararajan, \textit{Riemann hypothesis for period polynomials of modular forms.} Proc. Natl. Acad. Sci. USA \textbf{113} (2016), no. 10, 2603--2608.

\bibitem{KZ} W.~Kohnen, D.~Zagier, \textit{Modular forms with rational periods.} Modular forms (Durham, 1983), 197--249,
Ellis Horwood Ser. Math. Appl.: Statist. Oper. Res., Horwood, Chichester, 1984.

\bibitem{Lang:ModularForms} S.~Lang, \textit{Introduction to modular forms.} Grundlehren der mathematischen Wissenschaften, No. 222. Springer-Verlag, Berlin--New York, 1976. ix+261 pp.

\bibitem{Polishchuk:YB} A.~Polishchuk, \textit{Classical Yang--Baxter equation and the $A_{\infty}$ constraint.}  Adv. Math. \textbf{168} (2002), no. 1, 56--95.

\bibitem{Polishchuk:Tangents} A.~Polishchuk, \textit{Triple Massey products on curves, Fay's trisecant identity and tangents to the canonical
embedding.}  Mosc. Math. J. \textbf{3} (2003), no. 1, 105--121, 259.

\bibitem{Weil:Elliptic} A.~Weil, \textit{Elliptic functions according to Eisenstein and Kronecker.} Ergebnisse der Mathematik und ihrer Grenzgebiete, Band 88. Springer-Verlag, Berlin-New York, 1976. ii+93 pp.

\bibitem{Zagier:Hecke} D.~Zagier, \textit{Hecke operators and periods of modular forms.} Festschrift in honor of I. I. Piatetski-Shapiro on the occasion of his sixtieth birthday, Part II (Ramat Aviv, 1989), 321--336, Israel Math. Conf. Proc., 3, Weizmann, Jerusalem, 1990.

\bibitem{Zagier:Periods} D.~Zagier, \textit{Periods of modular forms and Jacobi theta functions.} Invent. Math. \textbf{104} (1991), no. 3, 449--465.

\end{thebibliography}
\end{document}